\newtheorem{theorem}{Theorem}[section]
\newtheorem{lemma}[theorem]{Lemma}
\newtheorem{proposition}[theorem]{Proposition}
\newtheorem{corollary}[theorem]{Corollary}
\def\N{\mathbb{N}}				%Natural numbers
\def\R{\mathbb{R}}					%Real numbers
\def\C{\mathcal{C}}				
\def\A{\mathcal{A}}				
\def\pminus{+-}
\def\mplus{-+}
\def\mminus{--}
\newcommand{\nc}{\newcommand}
\nc{\imin}{i_{\textrm{min}}}
\nc{\p}[1]{\medskip\noindent{\em #1.}}
\nc{\margin}[1]{\marginpar{\scriptsize #1}}
\title[Efficient geodesics and an effective algorithm for distance]{Efficient geodesics and an effective algorithm for distance in the complex of curves}
\author{Joan Birman, Dan Margalit, and William Menasco}
\address{Joan Birman\\
Department of Mathematics, Barnard--Columbia\\
2990 Broadway\\
New York, NY 10027, USA\\ jb@math.columbia.edu}
\address{Dan Margalit \\ School of Mathematics\\ Georgia Institute of Technology \\ 686 Cherry St. \\ Atlanta, GA 30332 \\  margalit@math.gatech.edu}
\address{William W. Menasco\\
Department of Mathematics\\
University at Buffalo--SUNY\\
Buffalo, NY 14260-2900, USA\\ menasco@buffalo.edu}
\thanks{The first author gratefully acknowledges partial support from the Simons Foundation, under Collaborative Research Award \#245711.
The second author gratefully acknowledges support from the National Science Foundation.}
\begin{document}

\vspace*{-1in}

\maketitle

\vspace*{-.25in}

\begin{abstract}  
We give an algorithm for determining the distance between two vertices of the complex of curves.  While there already exist such algorithms, for example by Leasure, Shackleton, and Webb, our approach is new, simple, and more effective for all distances accessible by computer.  Our method gives a new preferred finite set of geodesics between any two vertices of the complex, called efficient geodesics, which are different from the tight geodesics introduced by Masur and Minsky.
\end{abstract}

\begin{figure}[h!]
\centerline{\includegraphics[width=1\textwidth]{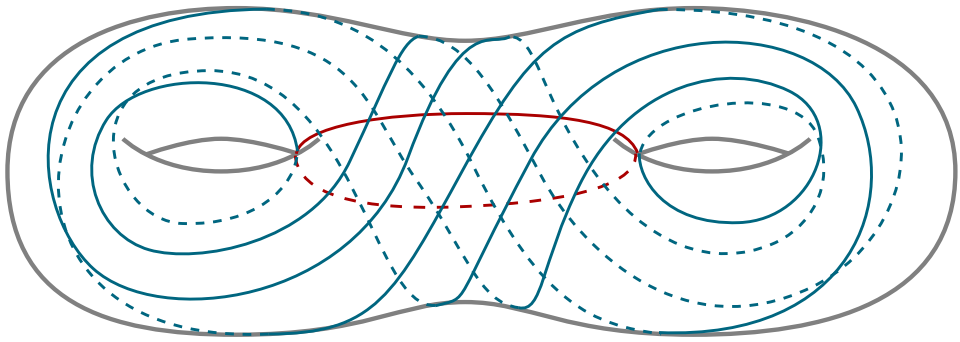}}
\caption{Vertices of $\C(S_2)$ with distance 4 and intersection number 12; this is the smallest possible intersection for vertices with distance 4}
\label{F:3dsym}
\end{figure}

\vspace*{-3ex}

\section{Introduction}
\label{sec:intro}

The complex of curves $\C(S)$ for a compact surface $S$ is the simplicial complex whose vertices correspond to isotopy classes of essential simple closed curves in $S$ and whose edges connect vertices with disjoint representatives.  We can endow the 0-skeleton of $\C(S)$ with a metric by defining the distance between two vertices to be the minimal number of edges in any edge path between the two vertices.

The geometry of $\C(S)$---especially the large-scale geometry---has been a topic of intense study over the past two decades, as there are deep applications to the theories of 3-manifolds, mapping class groups, and Teichm\"uller space; see, e.g., \cite{Minsky}. The seminal result, due to Masur and Minsky in 1996, states that $\C(S)$ is $\delta$-hyperbolic \cite{MM1}.  Recently, several simple proofs of this fact have been found, and it has been shown that $\delta$ can be chosen independently of $S$; see  \cite{A,BB,CRS,HPW,SP}.  

In 2002, Leasure \cite[\S 3.2]{L} found an algorithm to compute the distance between two vertices of $\C(S)$, and since then other algorithms have been devised by Shackleton \cite{S}, Webb \cite{Webb}, and Watanabe \cite{Watanabe1}.  About his algorithm, Leasure says:
\begin{quote}
\emph{We do not mention this in the belief that anyone will ever implement
it. The novelty is that finding the exact distance between two curves in the
curve complex should be so awkward.}
\end{quote}
One goal of this paper is to give an algorithm for distance---the efficient geodesic algorithm---that actually can be implemented, at least for small distances.  The third author and Glenn, Morrell, and Morse \cite{GMMM} have in fact already developed an implementation of our algorithm, called Metric in the Curve Complex \cite{MICC}.  Their program is assembling a data bank of examples as we write.

\medskip

\p{Known examples} Let $S_g$ denote a closed, connected, orientable surface of genus $g$ and let $\imin(g,d)$ denote the minimal intersection number for vertices of $\C(S_g)$ with distance $d$.   The Metric in the Curve Complex program has been used to show that:
\begin{enumerate}
\item $\imin(2,4)=12$ and 
\item $\imin(3,4) \leq 21$.
\end{enumerate}
The highly symmetric example in Figure~\ref{F:3dsym}---which realizes $\imin(2,4)$---was discovered using the program.  See Section~\ref{S:computation} for a discussion of this example and a proof using the methods of this paper that the distance is actually 4.

We are only aware of one other explicit picture in the literature of a pair of vertices of $\C(S_2)$ that have distance four, namely, the example of Hempel that appears in the notes of Saul Schleimer \cite[Figure 2]{SS} (see \cite[Example 1.6]{GMMM} for a proof that the distance is 4).  This example has geometric intersection number 25.   

Using the bounded geodesic image theorem \cite[Theorem 3.1]{MM2} of Masur and Minsky (as quantified by Webb \cite{Webb3}) it is possible to explicitly construct examples of vertices with any given distance; see \cite[Section 6]{S}.  We do not know how to keep the intersection numbers close to the minimum with this method, but Aougab and Taylor did in fact use this method to give examples of vertices of arbitrary distance whose intersection numbers are close to the minimum in an asymptotic sense; see their paper \cite{AT} for the precise statement.

\medskip

\p{Local infinitude} One reason why computations with the complex of curves are so difficult is that it is locally infinite and moreover there are infinitely many geodesics (i.e. shortest paths) between most pairs of vertices.  Masur and Minsky  \cite{MM2} addressed this issue by finding a preferred set of geodesics, called tight geodesics, and proving that between any two vertices there are finitely many tight geodesics; see Section~\ref{sec:vs} for the definition.
Our first goal is to give a new class of geodesics that still has finitely many elements connecting any two vertices but is more amenable to certain computations.

\smallskip

\p{Efficient geodesics} Our approach to geodesics in $\C(S)$ is defined in terms of intersections with arcs.  First, suppose that $\gamma$ is an arc in $S$ and $\alpha$ is a simple closed curve in $S$.  We say that $\gamma$ and $\alpha$ are in \emph{minimal position} if $\alpha$ is disjoint from the endpoints of $\gamma$ and the number of points of intersection of $\alpha$ with $\gamma$ is smallest over all simple closed curves that are homotopic to $\alpha$ through homotopies that do not pass through the endpoints of $\gamma$.  

Let $v_0, \dots, v_n$ be a geodesic of length at least three in $\C(S)$, and let $\alpha_0$, $\alpha_1$, and $\alpha_n$ be representatives of $v_0$, $v_1$, and $v_n$ that are pairwise in minimal position (this configuration is unique up to isotopy of $S$).  A \emph{reference arc} for the triple $\alpha_0, \alpha_1, \alpha_n$ is an arc $\gamma$ that is in minimal position with $\alpha_1$ and whose interior is disjoint from $\alpha_0 \cup \alpha_n$; such arcs were considered by Leasure \cite[Definition 3.2.1]{L}.

We say that the oriented geodesic $v_0,\dots,v_n$ is \emph{initially efficient} if 
\[ |\alpha_1\cap\gamma| \leq n-1 \]
for all choices of reference arcs $\gamma$ (this is independent of the choices of $\alpha_0$, $\alpha_1$, and $\alpha_n$ by the uniqueness statement above).  
Finally, we say that $v=v_0, \dots, v_n=w$ is \emph{efficient} if the oriented geodesic $v_k,\dots,v_n$ is initially efficient for each $0 \leq k \leq n-3$ and the oriented geodesic $v_n,v_{n-1},v_{n-2},v_{n-3}$ is also initially efficient.

We emphasize that to test the initial efficiency of $v_k,\dots,v_n$ we should look at reference arcs for the triple $v_k$, $v_{k+1}$, and $v_n$ and we allow $n-k-1$ points of intersection of (a representative of) $v_{k+1}$ with any such reference arc.

\p{Existence of efficient geodesics} 
Our main result is that efficient geodesics always exist, and that there are finitely many between any two vertices.

\begin{theorem}
\label{T:simplify}
Let $g \geq 2$.  If $v$ and $w$ are vertices of $\C(S_g)$ with $d(v,w) \geq 3$, then there exists an efficient geodesic from $v$ to $w$.  What is more, there is an explicitly computable list of at most 
\[ n^{6g-6} \]
vertices $v_1$ that can appear as the first vertex on an initially efficient geodesic \[ v=v_0,v_1,\dots,v_n=w. \]
In particular, there are finitely many efficient geodesics from $v$ to $w$.
\end{theorem}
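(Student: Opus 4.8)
The plan is to isolate a single \emph{surgery lemma}, bootstrap it first to produce an initially efficient first step and then to assemble a fully efficient geodesic, and finally to obtain the counting bound separately by coordinatizing the first vertex.

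First I would set up the existence proof as a minimization. Fix representatives $\alpha_0$ of $v$ and $\alpha_n$ of $w$ in minimal position; since $d(v,w)\geq 3$ the curves $\alpha_0,\alpha_n$ fill $S_g$, so $S_g\setminus(\alpha_0\cup\alpha_n)$ is a union of disks. Among all vertices $u$ with $d(v,u)=1$ and $d(u,w)=n-1$, choose one, $v_1$, minimizing the geometric intersection number $i(v_1,w)$. The heart of the matter is the following surgery lemma: if the triple $(v,v_1,w)$ is \emph{not} initially efficient, i.e.\ some reference arc $\gamma$ satisfies $|\alpha_1\cap\gamma|\geq n$, then $v_1$ is not minimal. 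To prove it I would surger $\alpha_1$ along $\gamma$, cutting $\alpha_1$ at its intersections with $\gamma$ and reconnecting the strands along subarcs of $\gamma$. Because the interior of $\gamma$ is disjoint from $\alpha_0\cup\alpha_n$, every resulting simple closed curve is again disjoint from $\alpha_0$ and meets $\alpha_n$ no more than $\alpha_1$ does; I would then show that at least one resulting curve $\alpha_1'$ is essential, is not isotopic to $\alpha_0$, still satisfies $d([\alpha_1'],w)=n-1$, and has $i([\alpha_1'],w)<i(v_1,w)$, contradicting minimality. Thus the minimizer gives an initially efficient first step.

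Next I would assemble a fully efficient geodesic. The key observation is that initial efficiency of $v_k,\dots,v_n$ depends only on the triple $(v_k,v_{k+1},v_n)$, not on the intervening vertices, so the geodesic can be built greedily: having chosen $v_0,\dots,v_k$, apply the surgery lemma to the pair $(v_k,w)$ to select $v_{k+1}$ making $(v_k,v_{k+1},w)$ initially efficient, for $k=0,\dots,n-4$. This reduces the problem to endpoints $v_{n-3}$ and $w$ at distance $3$, where I must choose $v_{n-2},v_{n-1}$ so that both $(v_{n-3},v_{n-2},w)$ and $(w,v_{n-1},v_{n-3})$ are initially efficient simultaneously. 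I expect this distance-$3$ base case to be the main obstacle, since a one-sided minimization can break efficiency on the other side; I would handle it by minimizing a two-sided quantity such as $i(v_{n-2},w)+i(v_{n-1},v_{n-3})$ and running the surgery lemma from whichever end currently violates efficiency, checking that a surgery on one side does not increase the intersection data relevant to the other. Inside the surgery lemma itself the two delicate points are guaranteeing that the distance to the fixed endpoint stays exactly $n-1$ (the lower bound $\geq n-1$ is the triangle inequality, but the upper bound needs that surgery does not increase distance to $w$) and that the intersection number strictly drops (which is where the hypothesis $|\alpha_1\cap\gamma|\geq n$, rather than $\geq n-1$, is used); these are where I would expect to spend the most effort.

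For the counting bound I would coordinatize the first vertex. Since $v_1$ is adjacent to $v_0$, the curve $\alpha_1$ is disjoint from $\alpha_0$ and so lives in $S_g$ cut along $\alpha_0$, where the arcs of $\alpha_n$ cut the complement into disks. Reference arcs, whose interiors avoid $\alpha_0\cup\alpha_n$, lie in these disks, and the intersection numbers of $\alpha_1$ with them record exactly the combinatorial normal coordinates of $\alpha_1$ relative to this cell structure. Using the Dehn--Thurston theorem (or a direct normal-coordinate reconstruction), a simple closed curve disjoint from $\alpha_0$ is determined uniquely by $6g-6$ such coordinates, and I would exhibit an explicit family of $6g-6$ reference arcs realizing them. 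Initial efficiency bounds each coordinate by $|\alpha_1\cap\gamma|\leq n-1$, so each lies in $\{0,1,\dots,n-1\}$; hence there are at most $n^{6g-6}$ coordinate tuples and thus at most $n^{6g-6}$ possible vertices $v_1$. Enumerating the tuples and testing each for realizability as a curve disjoint from $\alpha_0$ makes the list explicitly computable, and finiteness of efficient geodesics follows by iterating the bound along the geodesic.
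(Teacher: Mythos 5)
Your counting argument is fine and is essentially the paper's own: cut $S_g$ along $\alpha_0$, pick $6g-6$ reference arcs cutting the complement of the $\alpha_n$-arcs into hexagons, observe that a curve disjoint from $\alpha_0$ is determined by its intersection numbers with these arcs, and bound each coordinate by $n-1$. The existence argument, however, has a genuine gap, and it is precisely the point the paper singles out as its main new idea. Your surgery lemma requires the surgered curve $\alpha_1'$ to satisfy three things: it is essential and disjoint from $\alpha_0$, it has $i([\alpha_1'],w) < i(v_1,w)$, and it still satisfies $d([\alpha_1'],w)=n-1$. The first two are fine, but the third is not a ``delicate point to check''---it fails in general, and nothing in your setup can certify it. The only reason you know $d(v_1,w)=n-1$ at all is the existence of the path $v_1,v_2,\dots,v_n$; when you surger $\alpha_1$ alone along $\gamma$, the connecting arc runs along a subarc of $\gamma$ that typically contains points of $\alpha_2$ (and of other $\alpha_i$), so $\alpha_1'$ acquires intersections with $\alpha_2$ and the path witness is destroyed (this is exactly the configuration in the right-hand side of Figure~\ref{F:idea}). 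Once that edge is gone there is no upper bound on $d([\alpha_1'],w)$: having fewer intersections with $\alpha_n$ does not control distance in $\C(S_g)$. So the surgered curve need not lie in your competitor set $\{u : d(v,u)=1,\ d(u,w)=n-1\}$, and the minimality of $i(v_1,w)$ is never contradicted.

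Repairing this is the actual content of the paper's Section~\ref{sec:simplify}: one must surger several of the curves $\alpha_1,\dots,\alpha_{n-1}$ \emph{simultaneously}, so that the output is again a path from $v$ to $w$ (that path is what certifies the distance), and accordingly one must minimize not $i(v_1,w)$ but the complexity $\sum_{k=1}^{n-1}\left(i(v_0,v_k)+i(v_k,v_n)\right)$ of the whole path. Showing that a violating reference arc always admits a coherent system of simultaneous surgeries is nontrivial: the paper records the order in which the $\alpha_i$ cross $\gamma$ in a dot graph, puts the sequence in sawtooth form, proves that more than $n-1$ crossings of $\alpha_1$ force an empty unpierced box or hexagon, and shows that such a shape is exactly what allows a compatible chain of surgeries (chosen by following a directed graph of surgery types). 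Your greedy assembly of full efficiency from initial efficiency, and your two-sided treatment of the distance-3 tail, are in the right spirit---the paper achieves this with a lexicographic minimization of suffix complexities, placing the reversed tail $v_n,\dots,v_{n-3}$ first---but both rest on the broken single-vertex lemma, so the existence half of the theorem cannot be completed along the route you propose without importing the simultaneous-surgery machinery.
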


We emphasize that our theorem is only for closed surfaces; see the discussion on page~\pageref{fail} about surfaces with boundary for an explanation.  We also mention that this theorem is stronger than Theorem 1.1 in the first version of this paper \cite{v1}; see Proposition~\ref{P:improvement} and the accompanying discussion.

\p{Finitely many reference arcs} While \emph{a priori} there are infinitely many reference arcs that need to be checked in the definition of initial efficiency there are in fact finitely many.  Indeed, let $\alpha_0$, $\alpha_1$, and $\alpha_n$ be representatives of $v$, $v_1$, and $w$ that have minimal intersection pairwise. Since $d(v,w) \geq 3$ it follows that $\alpha_0$ and $\alpha_n$ \emph{fill} $S$, which means that they together decompose $S$ into a collection of polygons.  We can endow each such polygon with a Euclidean metric and replace each segment of $\alpha_1$ in each polygon with a straight line segment.  

There are finitely many non-rectangular polygons in the decomposition since each $2k$-gon contributes $-(k-2)/2$ to $\chi(S)$.  And each reference arc in a rectangular region is parallel to one in a non-rectangular region.    Thus in order to check initial efficiency, it is enough to consider reference arcs that lie in a non-rectangular polygonal region.  Furthermore, it is enough to consider reference arcs that are straight line segments connecting the midpoints of the $\alpha_0$-edges of a polygon.  Indeed, such an arc is necessarily in minimal position with $\alpha_1$ and any other reference arc can be extended to such a reference arc.  

In the special case that the reference arc connects the midpoints of $\alpha_0$-edges that are consecutive in a polygon, the reference arc is parallel to the $\alpha_n$-edge in between.  In this case points of $\alpha_1 \cap \gamma$ are in bijection with points of $\alpha_1 \cap \alpha_n$, and so the definition of initial efficiency can be translated into a statement about intersections of $\alpha_1$ with $\alpha_n$; see Proposition~\ref{P:improvement} below.

\p{Finitude of efficient geodesics} The main point of Theorem~\ref{T:simplify} is the existence statement; the finiteness statement can be dispensed with immediately.  Indeed, for any geodesic $v_0,\dots,v_n$ let $\alpha_0$, $\alpha_1$, and $\alpha_n$ be representatives of $v_0$, $v_1$, and $v_n$ that have minimal intersection pairwise.  As above,  $\alpha_0$ and $\alpha_n$ decompose $S_g$ into a collection of polygons.

If we cut $S_g$ along $\alpha_0$ we obtain a surface $S_g'$ with two boundary components on which $\alpha_n$ becomes a collection of arcs.  The $\alpha_n$-arcs cut $S_g'$ into a collection of even-sided polygons.  We can choose reference arcs in $S_g'$ that are disjoint from each other, that have interiors disjoint from the $\alpha_n$-arcs, and that cut $S_g'$ into hexagons.  Such a collection is obtained by taking one reference arc parallel to each parallel family of arcs of $\alpha_n$ and then taking additional reference arcs cutting across any remaining polygons with more than six sides.

An Euler characteristic count shows that any such collection of reference arcs has $6g-6$ elements.  Also, since $\alpha_1$ is disjoint from $\alpha_0$ the curve $\alpha_1$ is determined up to homotopy by the number of intersections it has with each reference arc.  By the definition of initial efficiency, each of these intersection numbers is between 0 and $n-1$.  This gives the bound stated in Theorem~\ref{T:simplify}.

\p{Discussion of the proof} Our method for proving Theorem~\ref{T:simplify} is detailed in Section~\ref{sec:simplify}.  Briefly, the idea is to show that if some geodesic $v=v_0, \dots, v_n=w$ is not initially efficient then we can modify $v_1,\dots,v_{n-1}$ by surgery in order to reduce the intersection of $v_1$ with $v_0$ and $v_n$.  The basic surgeries we use in our proof are not new.  The crucial point---and our new idea---is that it is usually not possible to reduce intersection by modifying a single vertex; rather, it is often the case that we can reduce intersection by modifying a sequence of vertices all at the same time. 

 \begin{figure}[htbp!]
\labellist
\small\hair 2pt
 \pinlabel {$v_n$} [ ] at -9 50
 \pinlabel {$v_0$} [ ] at 9 7
 \pinlabel {$v_1$} [ ] at 22 74
 \pinlabel {$v_1'$} [ ] at 208 50
 \pinlabel {$v_1$} [ ] at 327 74
 \pinlabel {$v_2$} [ ] at 385 25
 \pinlabel {$v_1'$} [ ] at 475 51
 \pinlabel {$v_2'$} [ ] at 524 60
\endlabellist
\centering{\includegraphics[width=.95\textwidth]{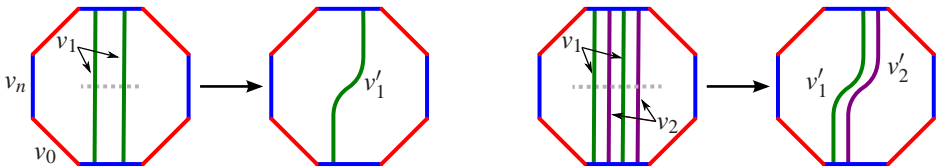}}
\caption{\emph{Left:} two arcs of $v_1$ and a simplifying surgery; \emph{Right:} arcs of $v_1$ and $v_2$ and a simplifying surgery}
\label{F:idea}
\end{figure}

Here is what we mean by this.  Suppose we have a geodesic $v_0,...,v_n$. Say there is a $v_0$-$v_n$ polygon with two parallel arcs of $v_1$ as in the first picture in the left-hand side of Figure~\ref{F:idea}.  Then we can perform a surgery along the dotted reference arc as in the figure in order to find a vertex $v_1'$ that is simpler in that it has fewer intersections with $v_0$ and $v_n$.  The vertex $v_1'$ can replace $v_1$ in the geodesic since the surgery did not create any intersections with $v_0$ or $v_2$.

Now suppose we have four parallel arcs of $v_1$, $v_2$, $v_1$, and $v_2$ (in order) as in the right-hand side of Figure~\ref{F:idea}.  We cannot surger $v_1$ as in the previous paragraph because this would create an intersection with $v_2$---an arc of $v_2$ is in the way.  However, we can perform surgery simultaneously on $v_1$ and $v_2$ along the dotted arc as in the figure.  This gives two new vertices $v_1'$ and $v_2'$ and again we can replace $v_1$ and $v_2$ with these new, simpler vertices.

Our basic strategy is to show that whenever we have an inefficient geodesic we can find a similar surgery in order to reduce intersection with $v_0$ and $v_n$.  If the reference arc only sees $v_1$ and $v_2$ then the surgeries in the previous two paragraphs apply.  The problem is that when there are more vertices $v_i$ involved, there are more and more complicated surgeries needed, and the combinatorics get to be unwieldy; look ahead to Figures~\ref{F:repetition} and~\ref{F:glock surgery} for examples of more complicated surgeries.

To deal with this problem, we introduce a new tool, the dot graph.  This is a graphical representation of the sequence of vertices $v_i$ seen along a reference arc; there is a dot at the point $(k,i)$ in the plane if the $k$th vertex along the arc is $v_i$ (see Figure~\ref{F:dot graph} below).  The existence of a simplifying surgery is translated into the existence of certain two-dimensional shapes in the dot graph (see Figure~\ref{F:types} below).  In this way, the unwieldy combinatorial problem becomes a manageable geometric one.

\p{Efficiency versus tightness} We already mentioned that there are finitely many tight geodesics between two vertices of $\C(S_g)$ and so Theorem~\ref{T:simplify} gives a second finite class of geodesics connecting two vertices of $\C(S_g)$.  The next proposition shows that the class of efficient geodesics is genuinely new.

\begin{proposition}
\label{prop:vs}
Let $g \geq 2$. In $\C(S_g)$ there are geodesics of length three that are...
\begin{enumerate}
\item efficient and tight, 
\item tight but not efficient, and
\item efficient but not tight.
\end{enumerate}
\end{proposition}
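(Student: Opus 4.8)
This proposition is an existence statement, so the plan is to construct, for each of the three combinations, an explicit length-three geodesic and to verify the two defining conditions directly. The first step is to record what both conditions mean for a geodesic $v_0,v_1,v_2,v_3$ of length $n=3$. By the definition in the excerpt, efficiency here amounts to initial efficiency of both the oriented geodesic $v_0,v_1,v_2,v_3$ and its reverse $v_3,v_2,v_1,v_0$; since $n-1=2$, this is the requirement that $|\alpha_1\cap\gamma|\le 2$ for every reference arc $\gamma$ of the triple $\alpha_0,\alpha_1,\alpha_3$, and symmetrically $|\alpha_2\cap\gamma'|\le 2$ for every reference arc $\gamma'$ of $\alpha_3,\alpha_2,\alpha_0$. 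On the other side, the Masur--Minsky tightness condition (Section~\ref{sec:vs}) requires $v_1$ and $v_2$ each to lie on the boundary of the subsurface filled by their two geodesic neighbors. By the reduction described just before the statement, each of these is a finite check: only reference arcs lying in the finitely many non-rectangular $\alpha_0$-$\alpha_3$ polygons need to be examined.

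The substance of the proof is then to exhibit the curves. I would first build the three examples on $S_2$, drawing each geodesic $v_0,v_1,v_2,v_3$ explicitly. For each example the key point is to certify that it really is a geodesic of length exactly three. The upper bound $d(v_0,v_3)\le 3$ is immediate from the displayed path. For the lower bound it suffices to show that $v_0$ and $v_3$ \emph{fill} $S_2$, i.e. that cutting $S_2$ along representatives in minimal position leaves a union of disks; filling is equivalent to $d(v_0,v_3)\ge 3$, and it is a finite combinatorial check on the complementary regions. To promote each example from $S_2$ to an arbitrary $S_g$ with $g\ge 2$, I would either realize an analogous filling configuration directly on $S_g$ by threading the curves through the extra handles, or take the $S_2$ picture and stabilize, taking care that the endpoints continue to fill so that the distance remains equal to three.

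With this certification scheme in place, the three cases are produced by engineering the middle vertices. For (1) I would take the most symmetric example, in which $v_1$ is literally forced to be a boundary curve of a neighborhood of $v_0\cup v_2$ (and likewise $v_2$), so tightness holds automatically, while the intersection pattern is small enough that every reference arc meets $\alpha_1$ and $\alpha_2$ at most twice. For (2) I would keep a tight geodesic but arrange the arcs of $v_1$ in an $\alpha_0$-$\alpha_3$ polygon so that a single reference arc crosses $\alpha_1$ three or more times; by the discussion around Proposition~\ref{P:improvement}, an excess of intersections of $\alpha_1$ with $\alpha_3$ along consecutive polygon edges produces exactly such a bad reference arc, breaking initial efficiency while leaving tightness intact. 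For (3) I would instead choose $v_1$ disjoint from $v_0$ and $v_2$ but \emph{not} equal to the boundary of the subsurface they fill---this is possible precisely when that subsurface has more than one boundary component---while keeping all reference-arc intersections at most two so that efficiency survives.

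The main obstacle throughout is the certification of distance: proving $d(v_0,v_3)=3$ rather than $\le 2$ is exactly the kind of curve-complex distance computation this paper is built to make tractable, and for explicit hand-drawn curves it reduces to verifying that the complement of $v_0\cup v_3$ contains no essential subsurface, i.e. that the curves fill. Carrying this out unambiguously for each of the three figures---together with producing an explicit offending reference arc in case (2) and checking \emph{all} finitely many reference arcs in cases (1) and (3)---is where the real work lies; the remaining verifications of the tightness condition are then straightforward regular-neighborhood computations once the pictures are fixed.
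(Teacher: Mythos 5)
Your overall strategy---explicit examples, a filling argument to certify $d(v_0,v_3)\geq 3$, and a finite reference-arc check for efficiency---is the same as the paper's; the paper just carries it out in the branched double cover, drawing arcs in a marked sphere $X_{2g+2}$ whose preimages in $S_g$ are the curves, which is what makes the verifications hand-checkable. The genuine gap is in your treatment of tightness, and it is fatal for case (3). In this paper a geodesic $v_0,v_1,v_2,v_3$ is \emph{tight} if it is contained in \emph{some} tight multigeodesic $\sigma_0,\sigma_1,\sigma_2,\sigma_3$, where the $\sigma_i$ are simplices that may be strictly larger than the singletons $\{v_i\}$. Your criterion for non-tightness---choose $v_1$ disjoint from $v_0,v_2$ but not equal to the boundary of the subsurface they fill---only shows that the particular multigeodesic with singleton simplices fails the regular-neighborhood condition at position $1$; it does not rule out a tight multigeodesic with larger simplices $\sigma_1 \ni v_1$, $\sigma_2 \ni v_2$. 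Note also that the neighborhood in the tightness condition is taken of $\sigma_0\cup\sigma_2$, not of $v_0\cup v_2$, so enlarging $\sigma_2$ changes the condition you are testing at position $1$; your criterion is neither necessary nor sufficient. This is exactly what the paper flags as ``the most subtle point.'' Its proof of (3) quantifies over all multigeodesics: first it forces $\sigma_1=\{v_1\}$ by arranging that $v_0\cup v_2$ fills the complement of $v_1$ (so no other curve can be disjoint from everything in sight), and then it shows tightness fails at position $2$ for \emph{every} choice of $\sigma_2\ni v_2$, by exhibiting an arc meeting $v_2$ but disjoint from $v_1\cup v_3$, so that $v_2$---hence any simplex containing it---cannot lie in a regular neighborhood of representatives of $\sigma_1\cup\sigma_3$. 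Without an argument of this shape, your case (3) is not proved.

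The same issue appears, less seriously, in case (1): if $v_1$ is merely one boundary component of a regular neighborhood of $v_0\cup v_2$, tightness does not ``hold automatically,'' because the definition requires $\sigma_1$ to be the union of \emph{all} essential boundary components. In the paper's example for (1) the certifying tight multigeodesic has non-singleton simplices $\sigma_1=\{v_1,v_1',v_1''\}$ and $\sigma_2=\{v_2,v_2',v_2''\}$, and the regular-neighborhood condition is checked for these enlarged simplices. (Your plan for case (2) is fine in spirit, since there one can take $v_1$ to be the entire essential boundary of a neighborhood of $v_0\cup v_2$, as the paper does, making the singleton multigeodesic itself tight.) So both where you want tightness and where you want its failure, the verification has to be stated at the level of multigeodesics rather than vertex paths; as written, your proposal does not engage with this quantifier, which is the actual content of the hardest part of the proposition.
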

We do not know if between any two vertices there always exists a geodesic that is efficient and tight.

Proposition~\ref{prop:vs} is proved by explicit construction;  see Section~\ref{sec:vs}.  The most subtle point is the third one, as it is in general not easy to prove that a given geodesic is not contained in any tight multigeodesic.

While the examples of geodesics in Proposition~\ref{prop:vs} all have length three, we expect that the result holds for all distances at least three.  It is also worth noting that our constructions are all delicate: it is not obvious how to modify our examples in order to obtain infinite families of examples.

\p{The efficient geodesic algorithm} We now explain how  Theorem~\ref{T:simplify} can be used in order to give an algorithm for distance in $\C(S_g)$, which we call the \emph{efficient geodesic algorithm}.  It is straightforward to determine if the distance between two vertices is 0, 1, or 2.  So assume that for some $k \geq 2$ we have an algorithm for determining if two vertices of $\C(S_g)$ have distance $0, \dots, k$.  We would like to give an algorithm for determining if the distance between two vertices is $k+1$.  

To this end, let $v$ and $w$ be two vertices of $\C(S_g)$.  By induction we can check if $d(v,w) \leq k$.  If not, then as in Theorem~\ref{T:simplify} we can explicitly list all possible vertices $v_1$ on an efficient geodesic $v=v_0,\dots,v_{k+1}=w$.  If $d(v_1,w) = k$ for some choice of $v_1$, then $d(v,w) = k+1$; otherwise it follows from Theorem~\ref{T:simplify} (the existence of efficient geodesics) that $d(v,w) \neq k+1$.

\begin{corollary}
\label{cor:algorithm}
The efficient geodesic algorithm computes distance in $\C(S_g)$.
\end{corollary}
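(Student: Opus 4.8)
The plan is to verify that the recursive procedure laid out just above is both correct and terminating. Since Theorem~\ref{T:simplify} supplies all of the geometry, the corollary reduces to a bookkeeping induction on the target distance, and the only genuine content is pinning down exactly which part of that theorem makes the recursion valid.

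First I would dispatch the base cases, i.e.\ that one can algorithmically decide whether $d(v,w)$ is $0$, $1$, or $2$. Distance $0$ is the decidable question of whether $v=w$; distance $1$ amounts to checking that $v\neq w$ and that the geometric intersection number $i(v,w)$ vanishes, which is computable by putting representatives in minimal position and removing bigons. For distance $2$ the key fact is that $d(v,w)\geq 3$ exactly when $v$ and $w$ fill $S_g$; hence $d(v,w)=2$ precisely when $v$ and $w$ neither coincide, nor are disjoint, nor fill, and whether minimal-position representatives fill is decidable by examining the complementary regions for an essential (equivalently, non-disk) component. This settles every case $k\leq 2$ and, crucially, supplies the hypothesis $k+1\geq 3$ needed to invoke Theorem~\ref{T:simplify}.

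Next I would run the inductive step. Assume for some $k\geq 2$ that we can decide whether $d(v,w)\in\{0,\dots,k\}$ for any pair of vertices. Given $v,w$, first apply this sub-algorithm; if it reports $d(v,w)\leq k$ we are done, so suppose $d(v,w)>k$. As $k+1\geq 3$, Theorem~\ref{T:simplify} produces, from $v$ and $w$ alone, an explicitly computable finite list $L$ of candidate first vertices $v_1$ of an efficient geodesic $v=v_0,v_1,\dots,v_{k+1}=w$. For each $v_1\in L$ I would use the inductive sub-algorithm to test whether $d(v_1,w)=k$, and declare $d(v,w)=k+1$ if and only if some $v_1\in L$ passes this test. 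Correctness rests on two implications. The easy one: if some $v_1\in L$ has $d(v_1,w)=k$ then, $v_1$ being the first vertex of a geodesic issuing from $v$ and hence adjacent to $v$, we get $d(v,w)\leq 1+k$, which together with $d(v,w)>k$ forces $d(v,w)=k+1$. The converse is where the existence half of Theorem~\ref{T:simplify} is indispensable: if $d(v,w)=k+1$ then an efficient geodesic $v=v_0,\dots,v_{k+1}=w$ genuinely exists, its first vertex lies in $L$ by construction of $L$, and since $v_1,\dots,v_{k+1}$ is itself a geodesic of length $k$ we have $d(v_1,w)=k$, so the test succeeds.

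Finally I would handle termination of the full distance computation: to find $d(v,w)$ one iterates the above over $k=2,3,\dots$ until an answer appears. This halts because $\C(S_g)$ is connected for $g\geq 2$, so $d(v,w)$ is finite, while each stage performs only finitely much work, as $L$ is finite and explicitly computable and only finitely many sub-calls are made. The main obstacle is conceptual rather than computational: one must be careful to invoke the \emph{existence} of efficient geodesics, not merely their finiteness, since without a guaranteed efficient geodesic the list $L$ could in principle contain no valid first vertex even when $d(v,w)=k+1$, and the algorithm would then wrongly conclude $d(v,w)\neq k+1$.
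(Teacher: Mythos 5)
Your proposal is correct and follows essentially the same route as the paper: induction on the target distance, with the base cases $d\le 2$ handled directly, the inductive step testing each candidate $v_1$ from the explicitly computable list of Theorem~\ref{T:simplify}, and correctness of the ``no candidate works'' branch resting precisely on the \emph{existence} of efficient geodesics. You simply spell out details the paper leaves as routine (the fillingness criterion for $d\ge 3$, the triangle-inequality direction, and termination), so there is nothing substantive to compare.
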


The special case of the efficient geodesic algorithm when the distance is four was explained to us by John Hempel and served as inspiration for the cases of larger distance.

\medskip

\p{Comparison with previously known algorithms} Our efficient geodesic algorithm is in the same spirit as the algorithms of Leasure, Shackleton, and Watanabe for computing distance in $\C(S_g)$.  All three show that there is a function $F$ of three variables so that for any two vertices $v$ and $w$ of $\C(S_g)$ with $d(v,w)=n$ there is a geodesic $v=v_0,\dots,v_n=w$ with $i(v_1,w)$ bounded above by $F(g,n,i(v,w))$.  This gives an algorithm in the same way as our efficient geodesic algorithm, since there is an explictly computable list of $v_1$ with $i(v,v_1)=0$ and $i(v_1,w) \leq F(g,n,i(v,w))$.  While the theorems of Leasure, Shackleton, and Watanabe apply to surfaces that are not closed, we restrict here to the case of closed surfaces for simplicity.

Our approach also gives such a function $F$.  By only considering reference arcs that are parallel to arcs of $\alpha_n \setminus \alpha_0$ (where $\alpha_0$ and $\alpha_n$ are minimally-intersecting representatives of $v_0$ and $v_n$), we deduce that for any initially efficient geodesic $v=v_0,\dots,v_n=w$ we have $i(v_1,v_n) \leq (n-2)i(v,w)$ (this uses a slight strengthening of a special case of Theorem~\ref{T:simplify}; see Proposition~\ref{P:improvement} below).  So we can take \[F_{BMM}(g,n,i(v,w)) = (n-2)i(v,w).\]
However, this bound does not use the full strength of initial efficiency as it does not give information as to how these points of intersection are distributed along $\alpha_n$ nor does it take into account reference arcs  that are not parallel to $\alpha_n$.

Leasure's function is 
\[ F_L(g,n,i(v,w)) = (6(6g-2)+2)^{n} i(v,w). \]
We can illustrate the improvement of our methods over Leasure's with the example in $\C(S_2)$ from Figure~\ref{F:3dsym}.  To prove the distance is 4, we can suppose for contradiction that it is 3.  According to Leasure, if $v_1$ is the first vertex we meet on a length 3 geodesic from $v$ to $w$, then we can choose $v_1$ so that it satisfies 
\[ i(v_1,w) \leq (6(6g-2)+2)^3 i(v,w) = 62^3 \cdot 12 = 2,859,936. \]
By contrast, any $v_1$ on an efficient geodesic of length 3 satisfies $i(v_1,w) \leq 12$ and, what is more, we know there is at most one intersection of $v_1$ along each edge of the polygonal decomposition of $S_2$ determined by $v$ and $w$ (cf. Proposition~\ref{P:improvement} below). Because of these strong restrictions, the computation can be carried out by hand, and in fact we apply the algorithm by hand to this example in Section~\ref{S:computation}.

Shackleton's function depends only on $i(v,w)$ and $g$, but not $d(v,w)$.  As explained by Watanabe \cite{Watanabe1}, Shackleton's function is 
\[ F_S(g,n,i(v,w)) = i(v,w) (4^5(3g-3)^3)^{2 \log_2 i(v,w)}. \]

Watanabe recently improved on Shackleton's result by replacing the exponential function with a linear one.  His work, like Webb's, uses the theory of tight geodesics.  Specifically, Watanabe's function is:
\[ F_{W}(g,n,i(v,w)) = R_g \, i(v,w) \]
where $R_g=(3g-3) \cdot 2^{(3M+1)^3(2g-2)^{3g-3}}$ and $M$ is the minimal possible constant in the bounded geodesic image theorem.  Since $R_g$ is independent of $n$, it follows that when $n$ is large compared to $g$ Watanabe's bounds give a better algorithm for distance than the efficient geodesic algorithm.  However, the smallest known upper bound for $M$ is 102 (see \cite{WebbThesis}), and so even for $g=2$, we have
\[ R_g = 3 \cdot 2^{231,475,544} > 10^{69,681,082}.\]
Thus, even for $g=2$ and some unimaginably large distances, our algorithm is more effective.

In the appendix we will explain Webb's algorithm for computing distance via tight geodesics.  As explained to us by Webb \cite{Webb2}, his methods give a corresponding function that again only depends on $g$:
\[ F_{W'}(g) = 
\frac{(6g-6) \left((4g-5)^{21} - 4g+5\right)}{2g-3}, \]
which for $g=2$ equals 62,762,119,200.

A more appropriate comparison with Webb's algorithm is to compare the number of vertices $v_1$ that need to be tested instead of the quantity $i(v_1,v_n)$.  In Webb's algorithm, this number is bounded above by:
\[ 2^{(72g+12)\min\{n-2,21\}} (2^{6g-6}-1  ) \]
(here we are really counting the number of candidate simplices $\sigma_1$ along a multigeodesic from $v$ to $w$); see the appendix of this paper for an explanation.  
On the other hand, our Theorem~\ref{T:simplify} states that the number of candidate vertices $v_1$ along an efficient geodesic $v_0,\dots,v_n$ is bounded above by $n^{6g-6}$.  Our bound is smaller than Webb's when $\min\{n-2,21\}=n-2$.  In the case that $\min\{n-2,21\}=21$ we estimate Webb's bound from below by $2^{(72g)(21)}$ and we find that our bound is smaller than Webb's for all distances less than $2^{21(12)}$, which is approximately $10^{75}$.  We conclude that among all known algorithms for distance in $\C(S)$ our methods are by far the most effective for all distances accessible by modern computers.

\p{Acknowledgments}  We would like to thank Ken Bromberg, Chris Leininger, Yair Minsky, Kasra Rafi, and Yoshuke Watanabe for helpful conversations. We are especially grateful to John Hempel for sharing with us his algorithm, to Richard Webb for sharing many ideas and details of his work, and to Tarik Aougab for many insightful comments, especially on the problem of constructing geodesics that are not tight.  Finally, we would like to thank Paul Glenn, Kayla Morrell, and Matthew Morse for supplying numerous examples generated by their program Metric in the Curve Complex.

%%%
%%%
%%%

\section{Examples}
\label{S:computation}

In this section we do two things.  First we illustrate the efficient geodesic algorithm by applying it to the  example from Figure~\ref{F:3dsym}.  Then we prove Proposition~\ref{prop:vs} by giving explicit examples for each of the three statements.  All of the examples will be presented in terms of the branched double cover of $S_g$ over the sphere, which we now explain.

\p{The branched double cover} Let $X_{2g+2}$ denote a sphere with $2g+2$ marked points.  The double cover branched over the marked points is the closed surface $S_g$.  The preimage of a simple arc in $X_{2g+2}$ connecting two marked points is a nonseparating simple closed curve in $S_g$, and the preimage of a simple closed curve that surrounds $2k+1$ marked points is a separating simple closed curve in $S_g$ that cuts off a subsurface of genus $k$.  

Minimally intersecting curves and arcs in $X_{2g+2}$ lift to minimally intersecting curves and arcs in $S_g$.  This follows from the work of the first author and Hilden on the symmetric mapping class group \cite{bh}; see also the paper by Winarski \cite{Winarski}.  Also, if two minimally intersecting curves or arcs fill $X_{2g+2}$---meaning that the complementary components are all disks with at most one marked point each---then the preimages fill $S_g$ since the preimage of a disk with at most one marked point is a disk.

\subsection{An example of the efficient geodesic algorithm}\label{subsec:example} Consider the two arcs $\delta$ and $\epsilon$ in $X_6$ shown in the left-hand side of Figure~\ref{F:examplesphere} (we depict $X_{2g+2}$ by drawing $2g+2$ dots in the plane; by adding an unmarked point at infinity, we obtain the sphere with $2g+2$ marked points).  Let $v$ and $w$ denote the corresponding vertices of $S_2$, the two-fold branched cover over $X_6$.  We would like to show that the distance between $v$ and $w$ in $\C(S_2)$ is 4 (it so happens that $v$ and $w$ are the same as the vertices of $\C(S_2)$ shown in Figure~\ref{F:3dsym}, but we will not need this).  The distance between $v$ and $w$ in $\C(S_2)$ can be computed with the computer program Metric in the Curve Complex, but here we explain how to apply our algorithm by hand.

\begin{figure}[htbp!]
\labellist
\small\hair 2pt
 \pinlabel {$\delta$} [ ] at 22 77
 \pinlabel {$\epsilon$} [ ] at 133 63
 \pinlabel {$\delta$} [ ] at 291 44
 \pinlabel {$\epsilon^+$} [ ] at 250 83
 \pinlabel {$\epsilon^-$} [ ] at 325 10
\endlabellist
\centering{\includegraphics[width=.95\textwidth]{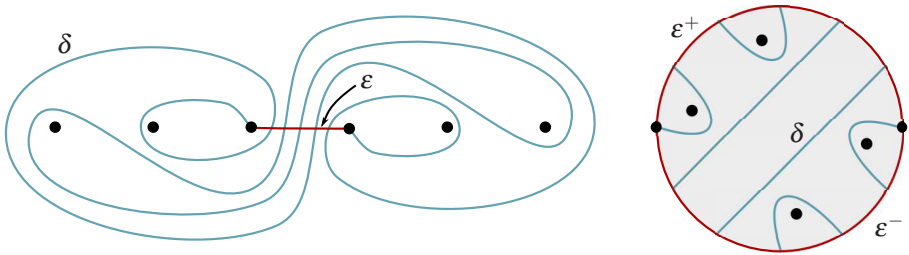}}
\caption{\emph{Left:} the arcs $\delta$ and $\epsilon$ in $X_6$ corresponding to the curves shown in Figure~\ref{F:3dsym}; \emph{Right:} the disk $\Delta$ obtained by cutting along $\epsilon$}
\label{F:examplesphere}
\end{figure}

First, we will show that $d(v,w) \leq 4$.  To do this, we observe that the horizontal line segment connecting the second and third marked points in the left-hand side of Figure~\ref{F:examplesphere} corresponds to a vertex $u$ in $\C(S_2)$ with $i(u,v)=i(u,w)=1$.  It follows that $d(u,v)=d(u,w)=2$ and by the triangle inequality that $d(v,w) \leq 4$.

If we cut $X_6$ along $\epsilon$, we obtain a disk $\Delta$, the shaded disk in the right-hand side of Figure~\ref{F:examplesphere}.  The boundary of $\Delta$ consists of two copies of $\epsilon$, say, $\epsilon^+$ and $\epsilon^-$, and in the figure points of $\epsilon^+$ and $\epsilon^-$ are identified in $X_6$ exactly when they lie on the same vertical line.   The arc $\delta$ becomes a collection of arcs in $\Delta$ as shown in the figure.  Since the arcs of $\delta$ cut $\Delta$ into a disjoint union of disks with at most one marked point each, it follows that $\delta$ and $\epsilon$ fill $S_2$ and so $d(v,w) \geq 3$.

It remains to use the efficient geodesic algorithm to show that $d(v,w) \geq 4$.  Assume that $d(v,w)$ were equal to three.  By Theorem~\ref{T:simplify} there is a path $v,v_1,v_2,w$ so that the number of intersections of $v_1$ with each arc of $w \setminus v$ is at most two (consider a reference arc parallel to the arc of $w \setminus v$).  Proposition~\ref{P:improvement} below gives an improvement: there is a choice of $v_1$ so that the intersection with each arc of $w \setminus v$ is at most one point.  Also, since $v,v_1,v_2,w$ is a path, this choice of $v_1$ satisfies $d(v_1,w) \leq 2$; in other words, (representatives of) $v_1$ and $w$ do not fill $S_2$.

A special feature of the genus two case is that every vertex of $\C(S_2)$ is obtained as the preimage of a curve or arc in $X_6$ (this again follows from the work of the first author with Hilden).  In this way, any $v_1$ as in the previous paragraph corresponds to an arc or curve $\beta$ in $\Delta$ that intersects each arc of $\delta$ in at most one point.  There are only six such candidates for $\beta$, namely the six straight line segments connecting marked points in the interior of $\Delta$.  It is straightforward to check that the arc in $X_6$ corresponding to each fills with $\delta$.  Therefore there is no $v_1$ as in the previous paragraph and we have $d(v,w) = 4$.

\subsection{Efficiency versus tightness}
\label{sec:vs}

We will now prove Proposition~\ref{prop:vs}---that there are geodesics in $\C(S_g)$ that are efficient and tight, geodesics that are efficient but not tight, and geodesics that are tight but not efficient.  First we recall the definition of a tight geodesic.

\p{Tight geodesics} A \emph{tight multigeodesic} is a sequence of simplices $\sigma_0,\dots,\sigma_n$ in $\C(S)$ where
\begin{enumerate}
\item $\sigma_0$ and $\sigma_n$ are vertices,
\item the distance between $v_i$ and $v_j$ is $|j-i|$ whenever $i \neq j$ and $v_i$ and $v_j$ are vertices of $\sigma_i$ and $\sigma_j$, respectively, and
\item for each $1 \leq i \leq n-1$ the simplex $\sigma_i$ can be represented as the union of the essential components of the boundary of a regular neighborhood in $S$ of minimally-intersecting representatives of $\sigma_{i-1}$ and $\sigma_{i+1}$.
\end{enumerate}
This definition is due to Masur and Minsky.\footnote{Masur and Minsky used the term ``tight geodesic,'' instead of ``tight multigeodesic,'' language we prefer to avoid because the object in question is not a geodesic.}  We will refer to any sequence of vertices $v_0,\dots,v_n$ with $v_i \in \sigma_i$ as a \emph{tight geodesic}.

\begin{proof}[Proof of Proposition~\ref{prop:vs}]

We begin with the first statement, there there are geodesics in $\C(S_g)$ that are both efficient and tight.  Consider the arcs $\delta_0$, $\delta_1$, $\delta_2$, and $\delta_3$ in $X_{12}$ shown in Figure~\ref{F:t and e}.  As above, each arc $\delta_i$ represents a vertex $v_i$ of $\C(S_5)$.  We have $d(v_0,v_3) = 3$ since $\delta_0$ and $\delta_3$ fill $X_{12}$.

To see that the  geodesic $v_0,v_1,v_2,v_3$ is efficient we first note that $\delta_1$ intersects only two regions of $X_{16}$ determined by $\delta_0$ and $\delta_3$.  One of these regions is a bigon with one marked point; the preimage of this is a rectangular region in $S_5$ and so it can be ignored.  The other region is a disk with no marked points and its preimage is a pair of disks in $S_5$.  The preimage of $\delta_1$ passes through each of these disks in $S_5$ once, whence the initial efficiency of $v_0,v_1,v_2,v_3$.  There is an obvious symmetry of $X_{12}$ reversing the geodesic and so $v_3,v_2,v_1,v_0$ is initially efficient.  Hence $v_0,v_1,v_2,v_3$ is indeed efficient.

\begin{figure}[h!]
 \labellist
\small\hair 2pt
\pinlabel $\delta_0$ at -2 56
\pinlabel $\delta_2'$ at 100 42
\pinlabel $\delta_2''$ at 175 42
\pinlabel $\delta_2$ at 26 42
\pinlabel $\delta_1'$ at 100 19
\pinlabel $\delta_1''$ at 175 19
\pinlabel $\delta_1$ at 26 19
\pinlabel $\delta_3$ at -2 7
\endlabellist
 	\centerline{\includegraphics[scale=1.25]{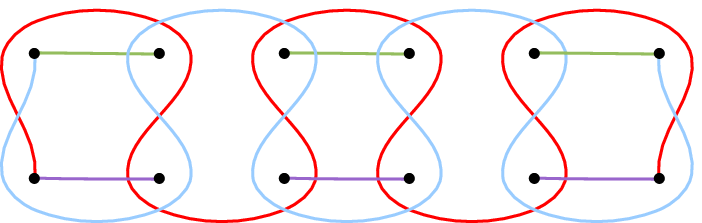}}
	\caption{Arcs giving a geodesic in $\C(S_5)$ that is both efficient and tight} 
	\label{F:t and e}
\end{figure}

Let $v_i'$ and $v_i''$ denote the vertices of $\C(S_5)$ corresponding to the arcs $\delta_i'$ and $\delta_i''$.  The simplices $\sigma_1=\{v_1,v_1',v_1''\}$ and $\sigma_2=\{v_2,v_2',v_2''\}$ give a tight multigeodesic $v_0,\sigma_1,\sigma_2,v_3$ with $v_1 \in \sigma_1$ and $v_2 \in \sigma_2$, certifying that $v_0,v_1,v_2,v_3$ is a tight geodesic.  (To verify this, note that the preimage in $S_g$ of a disk with two marked points in $X_{2g+2}$ is an annulus.)

For any odd $g > 2$ a straightforward generalization applies.  For even $g$ a slight modification is needed; for instance to obtain an analogous example for $S_4$ from Figure~\ref{F:t and e}, we move the left-hand endpoints of $\delta_0$ and $\delta_3$ together and we move the right-hand endpoints together as well, giving a collection of arcs in $X_{10}$.

\begin{figure}[h!]
 \labellist
\small\hair 2pt
 \pinlabel {$\delta_0$} [ ] at 10 35
 \pinlabel {$\delta_2$} [ ] at 129 42
 \pinlabel {$\delta_3$} [ ] at 69 63
\endlabellist
 	\centerline{\includegraphics[scale=1.1]{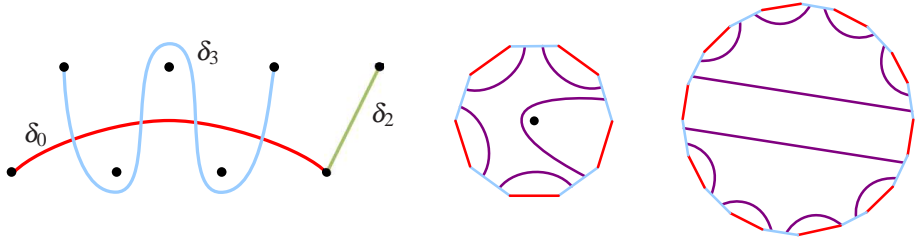}}
	\caption{\emph{Left:} Arcs in $X_8$ giving a tight geodesic in $\C(S_3)$; \emph{Middle:} The outer 10-gon in $X_8$ cut along $\delta_0$ and $\delta_3$ shown with arcs of $\delta_1$; \emph{Right:} The preimage of the 10-gon in $S_3$ shown with the preimage of $\delta_1$}
	\label{F:t not e}
\end{figure}

 We now give examples of geodesics that are tight but not efficient.  Consider the arcs $\delta_0$, $\delta_2$, and $\delta_3$ in  shown in the left-hand side of Figure~\ref{F:t not e}.  Let $\delta_1$ be the boundary of a regular neighborhood of $\delta_0 \cup \delta_2$; this $\delta_1$ is a curve surrounding three marked points.  Let $v_0,v_1,v_2,v_3$ be the corresponding path in $\C(S_3)$.  We have $d(v_0,v_3) \geq 3$ since $\delta_0$ and $\delta_3$ fill $X_{8}$.  By definition $v_0,v_1,v_2,v_3$ is tight at $v_1$ (meaning that the third part of the definition of a tight multigeodesic is satisfied for $i=1$) and it is straightforward to check that it is tight at $v_2$; so more than being contained in a tight multigeodesic, the given geodesic is itself a tight multigeodesic (in other words, $v_0,v_1,v_2,v_3$ is a tight multigeodesic with a single associated tight geodesic).

We will now show that the oriented geodesic $v_0,v_1,v_2,v_3$ is not initially efficient.  If we cut $X_8$ along $\delta_0$ and $\delta_3$ there is a single region that is not a bigon with one marked point, namely, the region containing the (umarked!) point at infinity.  There are five arcs of $\delta_1$ in this disk as shown in the middle picture of Figure~\ref{F:t not e} (the exact configuration relative to the marked point is important here).  The preimage of this 10-gon in $S_3$ is a 20-gon, and the arcs of the preimage of $\delta_1$ are arranged as in the right-hand side of Figure~\ref{F:t not e}.  It is easy to find a reference arc in this polygon that intersects the preimage of $\delta_1$ in more than two points.  Thus $v_0,v_1,v_2,v_3$ is not initially efficient; of course this implies that $v_0,v_1,v_2,v_3$ is not efficient.  The generalization to higher genus should be clear.

\begin{figure}[h!]
 \labellist
\small\hair 2pt
 \pinlabel {$\delta_1$} [ ] at 20 182
 \pinlabel {$\delta_2$} [ ] at 93 84
 \pinlabel {$\delta_0$} [ ] at 68 68
 \pinlabel {$\delta_3$} [ ] at 82 206
 \pinlabel {$\delta_1$} [ ] at 20 57
\endlabellist
 	\centerline{\includegraphics[scale=.95]{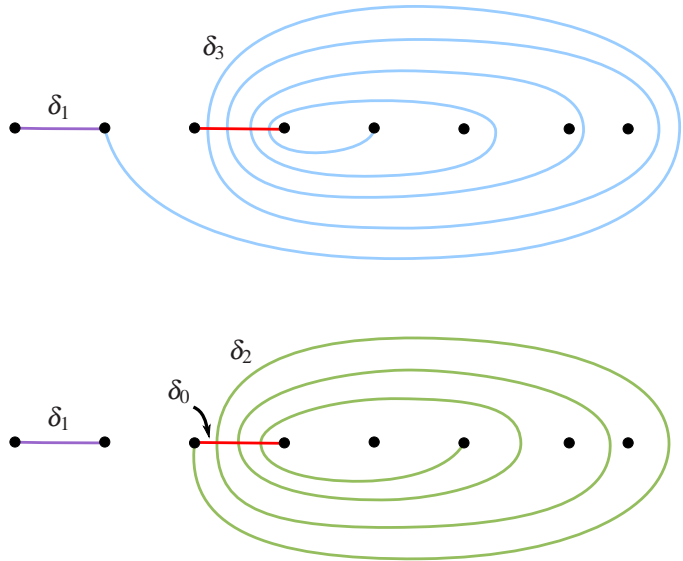}}
	\caption{Arcs giving an efficient geodesic in $\C(S_3)$ that is not tight}
	\label{F:vs}
\end{figure}

Finally we give examples of geodesics that are efficient but not tight.  Consider the arcs $\delta_0$, $\delta_1$, $\delta_2$, and $\delta_3$ shown in Figure~\ref{F:vs} (the arcs $\delta_0$, $\delta_1$, and $\delta_3$ are shown in the top picture of the figure and the arcs $\delta_0$, $\delta_1$, and $\delta_2$ are shown at the bottom).  Again, each $\delta_i$ represents a vertex $v_i$ of $\C(S_3)$ and again $d(v_0,v_3) = 3$ since $\delta_0$ and $\delta_3$ fill $X_{8}$.

To see that the oriented geodesic $v_0,v_1,v_2,v_3$ is initially efficient we notice that $\delta_1$ lies in a single region of $X_8$ determined by $\delta_0$ and $\delta_3$ and in that region it connects two marked points, one of which lies on $\delta_3$. It follows that the preimage of $\delta_1$ in $S_3$ is a single nonseparating simple closed curve and if we cut $S_3$ along the preimages of $\delta_0$ and $\delta_3$ then this nonseparating curve becomes a single diagonal in a single polygonal region of the cut-open surface.  From this it follows that $v_0,v_1,v_2,v_3$ is initially efficient.

A similar argument shows that the oriented geodesic $v_3,v_2,v_1,v_0$ is initially efficient.  Indeed, the intersection of the arc $\delta_2$ with each region of $X_8$ determined by $\delta_0$ and $\delta_3$ is a single arc.  It follows that the preimage of $\delta_2$ in $S_3$ intersects each polygonal region of $S_3$ in one or two arcs (depending on whether the corresponding arc in $X_8$ terminates at a marked point not contained in $\delta_0 \cup \delta_3$).  As such, any reference arc in $S_3$ for the preimages of $\delta_0$ and $\delta_3$ can intersect the preimage of $\delta_2$ in at most two points. The efficiency of $v_0,v_1,v_2,v_3$ follows.

We will now show that $v_0,v_1,v_2,v_3$ is not tight, in other words that $v_0,v_1,v_2,v_3$ is not contained in any tight multigeodesic.  Suppose $\sigma_0,\sigma_1,\sigma_2,\sigma_3$ were a tight multigeodesic containing $v_0,v_1,v_2,v_3$.  First of all, by definition we would have $\sigma_0 = v_0$ and $\sigma_3=v_3$.  Second, since (representatives of) $v_0$ and $v_2$ fill the complement of (a representative of) $v_1$ we must have that $\sigma_1=v_1$.  Now we notice that $v_2$ does not lie in a regular neighborhood of the union of representatives of $\sigma_1=v_1$ and $\sigma_3=v_3$ (since we can find an arc in $X_{8}$ that intersects $\delta_2$ without intersecting $\delta_1$ or $\delta_3$).  Therefore, for any choice of simplex $\sigma_2$ containing $v_2$ we will still have the property that $\sigma_2$ does not lie in a regular neighborhood of the union of representatives of $\sigma_1$ and $\sigma_3$; in particular, for any choice of $\sigma_2$ containing $v_2$, the sequence $\sigma_0,\sigma_1,\sigma_2,\sigma_3$ is not tight at $\sigma_2$.   Hence $v_0,v_1,v_2,v_3$ is not tight, as desired.  Again the generalization to higher genus is clear.
\end{proof}

%%%
%%%
%%%

\section{Existence of efficient paths}
\label{sec:simplify}

In this section we prove the main result of this paper, Theorem~\ref{T:simplify}.  The main point is to prove the existence of initially efficient geodesics (Proposition~\ref{P:initial}), and this will occupy most of the section.  At the end we give the additional inductive argument for the existence of efficient geodesics (Theorem~\ref{T:simplify}).  Let $g \geq 2$ be fixed throughout.

\subsection{Setup: a reducibility criterion}

Our first goal is to recast the problem of finding initially efficient paths in terms of sequences of numbers; see Proposition~\ref{prop:reducible} below.  

\p{Standard representatives and intersection sequences}
Let $v$ and $w$ be vertices of $\C(S_g)$ with $d(v,w)\geq 3$.   Let $v=v_0,\dots,v_n=w$ be an arbitrary path from $v$ to $w$.  We can choose representatives $\alpha_i$ of the $v_i$ with the following properties:
\begin{enumerate}
\item each $\alpha_i$ is in minimal position with both $\alpha_0$ and $\alpha_n$, 
\item each intersection $\alpha_i \cap \alpha_{i+1}$ is empty, and   
\item all triple intersections of the form $\alpha_i \cap \alpha_j \cap \alpha_k$ are empty.
\end{enumerate}
To do this, we take the $\alpha_i$ to be geodesics with respect to some hyperbolic metric on $S_g$ and then perform small isotopies to remove triple intersections.  We say that such a collection of representatives for the $v_i$ is \emph{standard}.  Note that we do not insist that $\alpha_i$ and $\alpha_j$ are in minimal position when $0 < i,j < n$ and $|i-j| > 1$.

Let $\gamma$ be a reference arc for the standard set of representatives $\alpha_0,\dots,\alpha_n$, by which we mean that:
\begin{enumerate}
 \item $\gamma$ has its interior disjoint from $\alpha_0 \cup \alpha_n$,
 \item $\gamma$ has endpoints disjoint from $\alpha_1,\dots,\alpha_{n-1}$,
 \item all triple intersections $\alpha_i \cap \alpha_j \cap \gamma$ are trivial for $i \neq j$, and
 \item $\gamma$ is in minimal position with each of $\alpha_1,\dots,\alpha_{n-1}$. 
\end{enumerate}
A reference arc for $\alpha_0,\dots,\alpha_n$ is automatically a reference arc for the triple $\alpha_0,\alpha_1,\alpha_n$ as in the introduction, but not the other way around.  We will need to deal with this discrepancy in the proof of Proposition~\ref{P:initial} below.

Denote the cardinality of $\gamma \cap (\alpha_1 \cup \cdots \cup \alpha_{n-1})$ by $N$.  Traversing $\gamma$ in the direction of some chosen orientation, we record the sequence of natural numbers $\sigma = (j_1,j_2,\dots,j_N) \in \{1,\dots,n-1\}^N$ so that the $i$th intersection point of $\gamma$ with $\alpha_1 \cup \cdots \cup \alpha_{n-1}$ lies in $\alpha_{j_i}$.  We refer to $\sigma$ as the \emph{intersection sequence} of the $\alpha_i$ along $\gamma$. 

\p{Complexity of paths and reducible sequences} We define the \emph{complexity} of an oriented path $v_0,\dots,v_n$ in $\C(S)$  to be
\[ \sum_{k=1}^{n-1} \left( i(v_0,v_k) + i(v_k,v_n) \right).\]
We say that a sequence $\sigma$ of natural numbers is \emph{reducible} under the following circumstances: whenever $\sigma$ arises as an intersection sequence for a (standard set of representatives for) path $v_0,\dots,v_n$ in $\C(S_g)$ there is another path $v_0',\dots,v_n'$ with $v_0'=v_0$ and $v_n'=v_n$ and with smaller complexity.
With this terminology in hand, the existence of initially efficient paths is a consequence of the following proposition.

\begin{proposition}
\label{prop:reducible}
Suppose $\sigma$ is a sequence of elements of $\{1,\dots,n-1\}$.  If $\sigma$ has more than $n-1$ entries equal to 1, then $\sigma$ is reducible.
\end{proposition}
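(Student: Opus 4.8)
The plan is to pass to the \emph{dot graph} of $\sigma$ and to show that the hypothesis of having more than $n-1$ entries equal to $1$ forces one of a finite list of surgerable shapes to appear; each such shape will encode a simultaneous surgery of a block $v_1,\dots,v_j$ that strictly lowers the complexity $\sum_{k=1}^{n-1}\left(i(v_0,v_k)+i(v_k,v_n)\right)$ while fixing the endpoints $v_0$ and $v_n$, which is exactly what reducibility of $\sigma$ demands. Concretely, I would fix a path $v_0,\dots,v_n$ realizing $\sigma$ as the intersection sequence of a standard family $\alpha_0,\dots,\alpha_n$ along a reference arc $\gamma$, and plot the dot at $(k,j_k)$ for each intersection point along $\gamma$. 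Because $\gamma$ has interior disjoint from $\alpha_0\cup\alpha_n$, any curve obtained by surgery along a sub-arc of $\gamma$ is built from retained arcs of the old curves together with copies of that sub-arc, so its intersections with $\alpha_0\cup\alpha_n$ come only from the retained pieces and the count never increases; a correctly chosen shape guarantees a genuine drop by discarding a strand that met $\alpha_n$. Thus the topological content reduces to a purely combinatorial search for a shape in the dot graph.

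The combinatorial search I would run by induction on $n$, proving the abstract statement that every word in $\{1,\dots,n-1\}$ with more than $n-1$ letters equal to $1$ contains a surgerable shape. For the base case $n=2$ the only available letter is $1$, and two $1$'s with nothing forced between them realize the elementary shape of the left-hand picture in Figure~\ref{F:idea}, giving a surgery on $v_1$ alone. For the inductive step I would consider the at least $n$ positions carrying the letter $1$, which cut out at least $n-1$ gaps between consecutive $1$'s. If some gap contains no letter $2$, its two bounding $1$'s again realize the elementary shape and we surger $v_1$. Otherwise every one of the at least $n-1$ gaps contains a $2$, so the letter $2$ occurs at least $n-1$ times; deleting all the $1$'s and relabeling $i\mapsto i-1$ produces a word in $\{1,\dots,n-2\}$ in which the letter $1$ occurs at least $n-1$ times, which is more than $n-2$, and the inductive hypothesis applies. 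The count is sharp: at least $n$ ones yields at least $n-1$ gaps, hence at least $n-1$ twos, and this is exactly ``more than $n-2$'' as required to invoke the inductive hypothesis.

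The step I expect to be the genuine obstacle is \emph{lifting} the surgerable shape found in the relabeled word back to a valid surgery in the original configuration. In the relabeled word the shape involves only the letters $2,3,\dots$, but in $\sigma$ these dots are interleaved with the deleted $1$'s, and a surgery arc that is harmless for $v_2,\dots,v_j$ may be blocked by an intervening strand of $\alpha_1$; this is precisely the phenomenon in the right-hand picture of Figure~\ref{F:idea}, where an arc of $v_2$ obstructs the naive surgery of $v_1$. The resolution should be that, since every gap between consecutive $1$'s already contains a $2$, the interleaving is controlled enough that the $1$-strands are absorbed into a \emph{simultaneous} surgery on the whole block $v_1,\dots,v_j$; in dot-graph terms the lower-level shape promotes to one of the larger repetition or glockenspiel shapes (compare Figures~\ref{F:repetition} and~\ref{F:glock surgery}). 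The technical core, and the place where the height bound $n-1$ and the sharp count of $1$'s are used in full, is to verify that the finite list of shapes is exhaustive, so that the induction always terminates in one of them, and that each shape on the list carries out a legitimate simultaneous surgery whose output curves $v_1',\dots,v_j'$ remain disjoint from their neighbors $v_0,v_2,\dots,v_{j+1}$ while strictly decreasing the complexity.
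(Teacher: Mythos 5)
Your combinatorial induction is genuinely different from the paper's counting argument (the paper first normalizes $\sigma$ to sawtooth form via Lemma~\ref{L:sawtooth} and then compares the maxima of consecutive ascending segments with minimum $1$: if $\max(e_{i+1})<\max(e_i)$ for all $i$ then there are at most $n-1$ ones, and otherwise one finds a box pierced in at most one edge), and your delete-the-ones-and-relabel count is correct as far as it goes. But the proposal does not prove the proposition, and the gap is exactly where you place it yourself: the lifting step. Because your recursion records only the multiset and gap structure of the letters and forgets positions, the ``elementary shape'' found at the bottom of the recursion is, back in $\sigma$, a scattered subword whose dots are separated along $\gamma$ by strands of \emph{all} the curves deleted at earlier stages; the surgery it encodes must keep each new curve disjoint from its neighbors $\alpha_{i-1},\alpha_{i+1}$, and which simultaneous surgery (if any) works depends on precisely how those low-index strands sit --- data your bookkeeping has discarded. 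For example, at a deeper level your ``gap with no $2$'' case produces two occurrences of a letter $j$ with no $j+1$ between them but possibly with letters less than $j$ in between, and then the surgery on $\alpha_j$ alone is blocked exactly as in the right-hand side of Figure~\ref{F:idea}; whether this promotes to the repetition pattern $\alpha_k,\dots,\alpha_{k+m},\alpha_k,\dots,\alpha_{k+m}$ of Figure~\ref{F:repetition} or to the interrupted pattern $\alpha_k,\dots,\alpha_{k+m},\ \alpha_k,\dots,\alpha_{k+\ell},\ (\text{lower letters}),\ \alpha_{k+\ell},\dots,\alpha_{k+m}$ requiring the hexagon surgery of Figure~\ref{F:glock surgery} is not determined by your induction. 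Your sentence that ``the technical core \dots\ is to verify that the finite list of shapes is exhaustive'' concedes the point: no list is produced and no exhaustiveness is verified, so the proposal is an outline whose hardest step is deferred.

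Closing that step is essentially the whole content of the paper's proof, and it is done by a different mechanism than induction on letters: sawtooth normalization by commutations (Lemma~\ref{L:sawtooth}), which makes shapes correspond to \emph{consecutive} runs along $\gamma$ rather than scattered subwords; the surgery lemma (Lemma~\ref{lemma:empty}), where disjointness of the chain of surgered curves is arranged by choosing surgery types along the directed graph of Figure~\ref{F:digraph}, and where the hexagon case needs the extra trick of surgering the triply-intersecting curve $\alpha_{k+\ell}$ so as to discard the strand that blocks the next surgery; and, crucially, the innermost-polygon argument (Lemmas~\ref{lemma:almost box} and~\ref{lemma:innermost}), which is what guarantees an \emph{empty, unpierced} box or hexagon and is the missing idea in your sketch --- emptiness and unpiercedness are precisely the conditions that tame the interleaving you worry about. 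Two smaller debts in your first paragraph would also need paying: that the surgered curve is essential (the bigon-criterion variant, which is where closedness of $S_g$ enters and where the argument breaks for surfaces with boundary, as discussed on page~\pageref{fail}), and that the discarded arc meets $\alpha_0\cup\alpha_n$ at least twice, so the complexity drop is strict and not merely non-increase.
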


We can deduce the existence of initially efficient geodesics easily from Proposition~\ref{prop:reducible}.  

\begin{proposition}
\label{P:initial}
Let $g \geq 2$.  If $v$ and $w$ are vertices of $\C(S_g)$ with $d(v,w) \geq 3$, then there exists an initially efficient geodesic from $v$ to $w$.  
\end{proposition}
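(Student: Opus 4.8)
The plan is to produce an initially efficient geodesic as a geodesic of minimal complexity. Write $n=d(v,w)$. Since the complexity of a path is a non-negative integer and there is at least one geodesic from $v$ to $w$, there is a geodesic $v=v_0,\dots,v_n=w$ whose complexity is minimal among all geodesics from $v$ to $w$; fix standard representatives $\alpha_0,\dots,\alpha_n$. I claim this geodesic is initially efficient. Suppose not. Then there is a reference arc $\gamma$ for the triple $\alpha_0,\alpha_1,\alpha_n$ with $|\alpha_1\cap\gamma|\geq n$, and the goal is to derive a contradiction from Proposition~\ref{prop:reducible}.

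The crux---and exactly the discrepancy flagged just after the definition of a reference arc---is that Proposition~\ref{prop:reducible} requires a reference arc for the \emph{full} standard set $\alpha_0,\dots,\alpha_n$, whereas $\gamma$ is only a reference arc for the triple. I would upgrade $\gamma$ as follows. Take the standard representatives to be geodesics for a fixed hyperbolic metric on $S_g$, and pass to the universal cover $\mathbb{H}^2$. Since $\alpha_0\cup\alpha_n$ fills $S_g$ and $\gamma$ has interior disjoint from it, $\gamma$ lies in a single complementary polygon $P$; a component $\widetilde P$ of the preimage of $P$ is a component of the complement of a family of complete geodesics (the lifts of $\alpha_0\cup\alpha_n$), hence lies on a definite side of each such geodesic and is therefore an intersection of half-planes, i.e.\ convex. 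Lift $\gamma$ to an arc $\widetilde\gamma\subset\widetilde P$ and replace it by the hyperbolic geodesic segment $\widetilde{\gamma'}$ with the same endpoints; by convexity $\widetilde{\gamma'}\subset\widetilde P$, so its projection $\gamma'$ has interior in $P$, disjoint from $\alpha_0\cup\alpha_n$. Because a geodesic segment meets each complete geodesic minimally, $\gamma'$ is automatically in minimal position with every $\alpha_1,\dots,\alpha_{n-1}$ at once; a generic arbitrarily small perturbation then removes the finitely many triple points $\alpha_i\cap\alpha_j\cap\gamma'$ and pushes the endpoints off the $\alpha_i$ without changing any intersection count or creating bigons, making $\gamma'$ a reference arc for $\alpha_0,\dots,\alpha_n$.

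The key remaining point is that this upgrade does not lose intersections with $\alpha_1$. Since $\gamma$ and $\gamma'$ are homotopic rel endpoints inside the convex (hence contractible) region $\widetilde P$, and both are in minimal position with $\alpha_1$ (the former by hypothesis, the latter because it is a geodesic), homotopy invariance of the geometric intersection number gives $|\alpha_1\cap\gamma'|=|\alpha_1\cap\gamma|\geq n$. Hence the intersection sequence of $\gamma'$ has more than $n-1$ entries equal to $1$, and Proposition~\ref{prop:reducible} applies: the sequence is reducible, so there is a path $v_0',\dots,v_n'$ with $v_0'=v$, $v_n'=w$, and strictly smaller complexity. This path has length $n=d(v,w)$ and so is itself a geodesic from $v$ to $w$, contradicting the minimality of the complexity of $v_0,\dots,v_n$. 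Therefore every reference arc for $\alpha_0,\alpha_1,\alpha_n$ satisfies $|\alpha_1\cap\gamma|\leq n-1$, i.e.\ $v_0,\dots,v_n$ is initially efficient.

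I expect the reference-arc upgrade of the second paragraph to be the main obstacle: one must simultaneously (i) keep the straightened arc inside the polygon $P$, (ii) achieve minimal position with all of $\alpha_1,\dots,\alpha_{n-1}$ at once, and (iii) certify that the count $|\alpha_1\cap\gamma|$ is preserved. Convexity of the complementary regions of the geodesic preimage of $\alpha_0\cup\alpha_n$ in $\mathbb{H}^2$ handles (i) and (ii) cleanly, and homotopy invariance of the intersection number handles (iii); once these are in place, the existence of a minimal-complexity geodesic and the contradiction via Proposition~\ref{prop:reducible} are purely formal.
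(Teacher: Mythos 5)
Your proposal is correct and follows essentially the same route as the paper: take a geodesic of minimal complexity among all geodesics from $v$ to $w$, and derive a contradiction from Proposition~\ref{prop:reducible} if it fails to be initially efficient, where your hyperbolic-straightening upgrade of the reference arc is precisely the alternative the paper itself records parenthetically (its primary argument instead keeps $\gamma$ fixed and pushes each $\alpha_i$, $2 \leq i \leq n-1$, across innermost bigons, which has the advantage that $\alpha_1$ and $\gamma$ are untouched, so the count $|\alpha_1\cap\gamma|$ needs no re-verification). One small point to patch: if both endpoints of $\gamma$ lie on the same edge of the complementary polygon, the straightened segment degenerates into $\alpha_0\cup\alpha_n$ and is not a reference arc at all --- this is harmless, since you may first trim $\gamma$ so that its endpoints lie in the open polygon (removing no intersections with $\alpha_1$), after which the geodesic segment between interior points of the convex lift stays in the open region and your argument runs verbatim.
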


\begin{proof}[Proof of Proposition~\ref{P:initial} assuming Proposition~\ref{prop:reducible}]

Let $v$ and $w$ be vertices of $\C(S_g)$ with $d(v,w) \geq 3$.  Since the complexity of any path from $v$ to $w$ is a natural number, there is a geodesic of minimal complexity.  We will show that any geodesic from $v$ to $w$ that has minimal complexity must be initially efficient.

To this end, we consider an arbitrary geodesic $v=v_0,\dots,v_n=w$ and we assume that it is not initially efficient.  In other words there is a set of representatives $\alpha_0,\alpha_1,\alpha_{n}$ for $v_0,v_1,v_{n}$ that are in minimal position and a reference arc $\gamma$ for $\alpha_0,\alpha_1,\alpha_{n}$ with $|\alpha_1 \cap \gamma| > n-1$.

We can extend the triple $\alpha_0,\alpha_1,\alpha_{n}$ to a set of standard representatives $\alpha_0,\dots,\alpha_{n}$ for the whole geodesic $v_0,\dots,v_n$.  What is more, we may assume that $\gamma$ is a reference arc for this full set of representatives $\alpha_0,\dots,\alpha_{n}$.  

Indeed, if $\gamma$ is not in minimal position with some $\alpha_i$ with $2 \leq i \leq n-1$ then by an adaptation of the usual bigon criterion for simple closed curves we have that $\gamma$ and $\alpha_i$ cobound an embedded bigon; if we choose an innermost such bigon (with respect to $\gamma$) and push the corresponding $\alpha_i$ across, then we can eliminate the bigon without creating any new points of intersection between $\gamma$ with any $\alpha_j$ or between any two $\alpha_j$.  (Alternatively, as in the introduction, we can assume that each $\alpha_i$ with $1 \leq i \leq n-1$ is a straight line segment in each polygon determined by $\alpha_0$ and $\alpha_{n}$ and we can take $\gamma$ to be any straight line segment; this procedure always yields a $\gamma$ that is in minimal position with each $\alpha_i$).

Since we did not change $\alpha_1$, the new intersection sequence of $\alpha_0,\dots,\alpha_{n}$ with $\gamma$ still has more than $n-1$ entries equal to 1.  By Proposition~\ref{prop:reducible}, the sequence $\sigma$ is reducible.  This implies that $v_0,\dots,v_n$ does not have minimal complexity, and we are done.
\end{proof}

Notice that the approach established in Proposition~\ref{prop:reducible} disregards all information about a path in $\C(S_g)$ except its intersection sequences.  For instance, we will not need to concern ourselves with how the strands of the $\alpha_i$ are connected outside of a neighborhood of $\gamma$.

We will prove Proposition~\ref{prop:reducible} in three stages.  First, in Section~\ref{section:stage 1} we describe a normal form for sequences of natural numbers (Lemma~\ref{L:sawtooth} below) and also describe an associated diagram for the normal form called the dot graph.   Next in Section~\ref{section:stage 2} we will show that if the dot graph exhibits certain geometric features---empty boxes and hexagons---then the sequence is reducible (Lemma~\ref{lemma:empty}).  Finally in Section~\ref{section:stage 3} we will show that any sequence in normal form that does not satisfy Proposition~\ref{prop:reducible} has a dot graph exhibiting either an empty box or an empty hexagon, hence proving Proposition~\ref{prop:reducible}.

\subsection{Stage 1: Sawtooth form and the dot graph}
\label{section:stage 1}

The main goal of this section is to give a normal form for sequences of natural numbers that interacts well with our notion of reducibility.  We also describe a way to diagram sequences in normal form called the dot graph.

\p{Sawtooth form} We say that a sequence $(j_1, j_2,\dots,j_k)$ of natural numbers is in \emph{sawtooth form}  if 
\[ j_i < j_{i+1}\ \ \Longrightarrow \ \ j_{i+1} = {j_i} +1.\]
An example of a sequence  in sawtooth form is $(1,2,2,3,4,3,4,3,4,2,3,4,5)$.  If a sequence of natural numbers is in sawtooth form, we may consider its \emph{ascending sequences}, which are the maximal subsequences of the form $k, k+1, \dots, k+m$.  In the previous example, the ascending sequences are $(1,2)$, $(2,3,4)$, $(3,4)$, $(3,4)$, and $(2,3,4,5)$.

\begin{lemma}
\label{L:sawtooth}
Let $\sigma$ be an intersection sequence.  There exists an intersection sequence $\tau$ in sawtooth form so that $\tau$ differs from $\sigma$ by a permutation of its entries and so that $\sigma$ is reducible if and only if $\tau$ is.  
\end{lemma}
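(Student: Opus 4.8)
The plan is to realize the passage from $\sigma$ to $\tau$ as a finite sequence of \emph{elementary swaps}, where each swap interchanges two \emph{adjacent} entries $a$ and $b$ of the intersection sequence subject to the constraint $|a-b|\ge 2$. The two claims of the lemma then follow from two facts about such a swap: (i) it can be realized geometrically while fixing the path $v_0,\dots,v_n$, so that a path realizes the original sequence if and only if it realizes the swapped one; and (ii) finitely many swaps of this type suffice to reach sawtooth form. Granting (i), reducibility is automatically preserved: the complexity of a path depends only on the vertices $v_0,\dots,v_n$, which the swap fixes, and reducibility of a sequence is precisely the assertion that every path realizing it admits a lower-complexity competitor with the same endpoints---a condition depending only on the set of paths realizing the sequence, which (i) leaves unchanged.

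To prove (i), let $a=j_i$ and $b=j_{i+1}$ be adjacent entries with $|a-b|\ge 2$, arising from consecutive crossings $p\in\alpha_a$ and $q\in\alpha_b$ along $\gamma$. The open subarc of $\gamma$ between $p$ and $q$ is disjoint from $\alpha_1\cup\cdots\cup\alpha_{n-1}$, so in a thin neighborhood of it the only strands present are those through $p$ and $q$. I would slide the crossing $q$ backwards along $\gamma$ and past $p$ by an isotopy of $\alpha_b$ supported in this neighborhood. Dragging $q$ across $p$ forces one new point of $\alpha_a\cap\alpha_b$; because $|a-b|\ge 2$ we have $a\neq b\pm 1$, so this respects the standard-representative condition that path-consecutive curves be disjoint. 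Since the neighborhood lies in a single complementary region of $\alpha_0\cup\alpha_n$ (where the interior of $\gamma$ lives), the isotopy creates no intersections with $\alpha_0$ or $\alpha_n$ and does not change the isotopy class of $\alpha_b$. After a small perturbation to remove triple points, the result is again a standard set of representatives with $\gamma$ a reference arc, whose intersection sequence is $\sigma$ with $j_i$ and $j_{i+1}$ transposed; as the move is an isotopy it is reversible, giving (i).

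For (ii), call an adjacent pair $(j_i,j_{i+1})$ a \emph{bad ascent} if $j_i<j_{i+1}$ and $j_{i+1}>j_i+1$; by definition a sequence is in sawtooth form exactly when it has no bad ascents. Every bad ascent satisfies $|j_i-j_{i+1}|\ge 2$, so I may apply an elementary swap to turn $(j_i,j_{i+1})$ into the descent $(j_{i+1},j_i)$. Transposing two adjacent entries reverses the relative order of exactly one pair of positions, so it strictly decreases the number of pairs $k<\ell$ with $j_k<j_\ell$. This is a nonnegative integer, so after finitely many swaps no bad ascent survives. The resulting sequence $\tau$ is a permutation of $\sigma$ in sawtooth form; it is an intersection sequence because it is realized by any path realizing $\sigma$, and by (i) it is reducible if and only if $\sigma$ is.

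The one genuinely geometric step, and the place I would be most careful, is the realizability in (i): I must check that sliding $q$ past $p$ can be arranged so that it changes no isotopy class, creates no intersection with $\alpha_0$ or $\alpha_n$ (this is what keeps the complexity fixed), leaves $\gamma$ in minimal position with every $\alpha_j$, and creates a new intersection only between $\alpha_a$ and $\alpha_b$---which is permissible precisely because the hypothesis $|a-b|\ge 2$ prevents $\alpha_a$ and $\alpha_b$ from being path-consecutive. The remainder of the argument is the combinatorial bookkeeping of (ii) together with its one-line monovariant.
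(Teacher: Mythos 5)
Your proof is correct, and its geometric core coincides with the paper's: your ``elementary swap'' is exactly the paper's commutation move, justified in the same way (the only new intersections created are between $\alpha_a$ and $\alpha_b$ with $|a-b|\geq 2$, which are not consecutive in the path, so the standard-representative conditions, the set of realizing paths, and hence reducibility are all untouched). Where you genuinely differ is in the combinatorial termination argument. The paper inducts on the highest index $k$ at which sawtooth form fails: it isolates the maximal block of entries exceeding $j_k+1$ that immediately follows $j_k$, commutes $j_k$ past that entire block in one batch, and verifies that the failure index strictly decreases, which requires checking sawtooth form across the four concatenated subsequences. You instead greedily swap any single bad ascent and observe that each such swap decreases by exactly one the number of pairs of positions whose entries are in increasing order---a cleaner monovariant that eliminates the paper's bookkeeping; both arguments are valid. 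One small inaccuracy worth fixing: dragging the crossing $q$ past $p$ by an isotopy supported near $\gamma$ forces \emph{two} new points of $\alpha_a\cap\alpha_b$ (one on each side of $\gamma$), not one, since the parity of the number of transverse intersections of two closed curves is an isotopy invariant. This is immaterial to your argument---all that matters is that the new intersections involve only the non-consecutive pair $\alpha_a$, $\alpha_b$, and that nothing new meets $\alpha_0\cup\alpha_n$---but the count as stated cannot be achieved.
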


\begin{proof}

Suppose $\sigma=(j_1, \dots, j_N)$ is the intersection sequence for a set of standard representatives $\alpha_0,\dots,\alpha_{n}$ along an arc $\gamma \subseteq \alpha_n\setminus\alpha_0$.  The basic idea we will use is that if $|j_i-j_{i+1}| > 1$, then we can modify $\alpha_{j_i}$ and $\alpha_{j_{i+1}}$ to new curves $\alpha_{j_i}'$ and $\alpha_{j_{i+1}}'$ so that the new curves still form a set of standard representatives for the same path and so that the new intersection sequence along $\gamma$ differs from $\sigma$ by a transposition of the consecutive terms $j_i$ and $j_{i+1}$; see  Figure~\ref{F:commute}.  We call this the resulting modification of $\sigma$ a commutation.

\begin{figure}[htbp!]
\labellist
\small\hair 2pt
\pinlabel $\gamma$ at -8 40
\pinlabel $\alpha_{j_i}$ at 20 95
\pinlabel $\alpha_{j_{i+1}}$ at 71 95
\pinlabel $\alpha_{j_i}'$ at 211 95
\pinlabel $\alpha_{j_{i+1}}'$ at 266 95
\endlabellist
\vspace*{3ex}
	\centerline{\includegraphics[width=.6\textwidth]{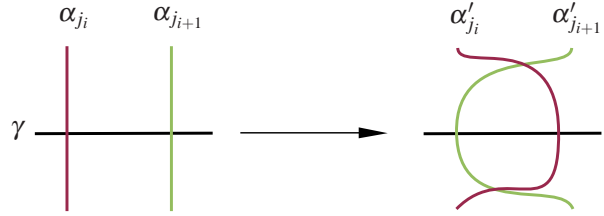}}
	\caption{A commutation}
	\label{F:commute}
\end{figure}

It suffices to show that if a sequence $\sigma$ is not in sawtooth form, then it is possible to perform a finite sequence of commutations so that the resulting sequence $\tau$ is in sawtooth form.  Indeed, the sequence $\tau$ appears as an intersection sequence for a particular path in $\C(S_g)$ if and only if $\sigma$ does (the key point is that commutations never result in a nonempty intersection of the form $\alpha_i \cap \alpha_{i+1}$).

We say that $\sigma$ fails to be in sawtooth form at the index $i$ if $j_{i+1} > j_i+1$.  Let $k=k(\sigma)$ be the highest index at which $\sigma$ fails to be in sawtooth form, and say that $k$ is zero if $\sigma$ is in sawtooth form.  Assuming $k > 0$, we will show that we can modify $\sigma$ by a sequence of commutations so that the highest index where the resulting sequence fails to be in sawtooth form is strictly less than $k$. 

We decompose $\sigma$ into a sequence of subsequences of $\sigma$, namely, 
\[ (\sigma_1,\sigma_2,\sigma_3,\sigma_4) \]
where $\sigma_2$ is the singleton $(j_k)$ and $\sigma_3$ is the longest subsequence of $\sigma$ starting from the $(k+1)$st term so that each term is greater than $j_k + 1$.  The sequences $\sigma_1$ and $\sigma_4$ are thus determined, and one or both might be empty.

By a series of commutations, we can modify $\sigma$ to the sequence
\[ \sigma' = (\sigma_1,\sigma_3,\sigma_2,\sigma_4). \]
We claim that $k(\sigma') < k(\sigma)$.  Since the length of $\sigma_1$ is $k-1$, it is enough to show that the subsequence $(\sigma_3,\sigma_2,\sigma_4)$ is in sawtooth form.

By the definition of $k$, we know that $\sigma_3$ is in sawtooth form.  Next, the last term of $\sigma_3$ is greater than $j_k+1$ and the first (and only) term of $\sigma_2$ is $j_k$, and so these terms satisfy the definition of sawtooth form.  We know $\sigma_2 = (j_k)$ and the first term of $\sigma_4$, call it $j$, is at most $j_k+1$, and so these terms are also in sawtooth form.  Finally, the subsequence $\sigma_4$ is in sawtooth form by the definition of $k$.  This completes the proof.
\end{proof}

\p{Dot graphs}  It will be useful to draw the graph in $\R_{\geq 0}^2$ of a given sequence of natural numbers, where the sequence is regarded as a function $\{1,\dots,N\} \to \N$.  The points of the graph of a sequence $\sigma$ will be called \emph{dots}.  We decorate the graph by connecting the dots that lie on a given line of slope 1; these line segments will be called \emph{ascending segments}.  The resulting decorated graph will be called the \emph{dot graph} of $\sigma$ and will be denoted $G(\sigma)$; see Figure~\ref{F:dot graph}.

\begin{figure}[htbp!]
\labellist
\small\hair 2pt
\endlabellist
	\centerline{\includegraphics[width=.7\textwidth]{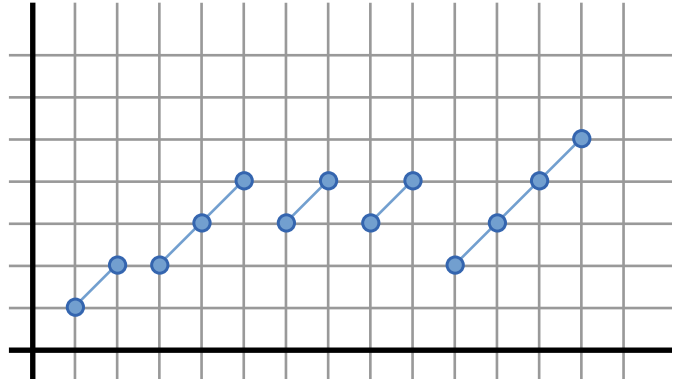}}
	\caption{Example of dot graph of a sequence in sawtooth form}
	\label{F:dot graph}
\end{figure}

%%%
%%%
%%%

\subsection{Stage 2: Dot graph polygons and surgery}
\label{section:stage 2}

The goal of this section is to describe certain geometric shapes than can arise in a dot graph, and then to prove that if the dot graph $G(\sigma)$ admits one of these shapes then the sequence $\sigma$ is reducible (Lemma~\ref{lemma:empty}).

\p{Dot graph polygons} 
 We say that a polygon in the plane is a \emph{dot graph polygon} if
\begin{enumerate}
\item the edges all have slope 0 or 1,
\item the edges of slope 0 have nonzero length, and 
\item the vertices all have integer coordinates.
\end{enumerate}
The edges of slope 1 in a dot graph polygon are called \emph{ascending edges} and the edges of slope 0 are called \emph{horizontal edges}.

Let $\sigma$ be a sequence of natural numbers in sawtooth form.  A dot graph polygon is a \emph{$\sigma$-polygon} if:
\begin{enumerate}
 \item the vertices are dots of $G(\sigma)$ and
 \item the ascending edges are contained in ascending segments of $G(\sigma)$. 
\end{enumerate}

\begin{figure}[htbp!]
	\centerline{\includegraphics[width=1.1\textwidth]{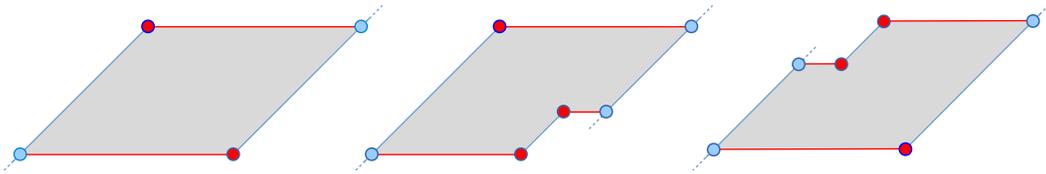}}
	\caption{A box, a hexagon of type 1, and a hexagon of type 2; the red (darker) dots are required to be endpoints of ascending segments, while the blue (lighter) dots may or may not be endpoints}
	\label{F:types}
\end{figure}

A \emph{box} in $G(\sigma)$ is a $\sigma$-quadrilateral $P$ with the following two properties:
\begin{enumerate}
 \item the leftmost ascending edge contains the highest point of some ascending segment of $G(\sigma)$ and
 \item the rightmost ascending edge contains the lowest point of some ascending segment of $G(\sigma)$.
\end{enumerate}

We will also need to deal with hexagons.  Up to translation and changing the edge lengths, there are four types of dot graph hexagons; two have an acute exterior angle, and we will not need to consider these.  Notice that a dot graph hexagon necessarily has a leftmost ascending edge, a rightmost ascending edge, and a middle ascending edge.  This holds even for degenerate hexagons since horizontal edges are required to have nonzero length.    

A \emph{hexagon of type 1} in $G(\sigma)$ is a $\sigma$-hexagon where:
\begin{enumerate}
\item no exterior angle is acute, 
\item the middle ascending edge is an entire ascending segment of $G(\sigma)$, and 
\item the minimum of the middle ascending edge equals the minimum of the leftmost ascending edge, 
\item the leftmost ascending edge contains the highest point of an ascending segment of $G(\sigma)$.
\end{enumerate}
Similarly, a \emph{hexagon of type 2} in $G(\sigma)$ is a $\sigma$-hexagon that satisfies the first two conditions above and the following third  and fourth conditions:
\begin{enumerate}
\item[($3'$)] the maximum of the middle ascending edge equals the maximum of the rightmost ascending edge,
\item[($4'$)]  the rightmost ascending edge contains the lowest point of an ascending segment of $G(\sigma)$.
\end{enumerate}

See Figure~\ref{F:types} for pictures of boxes and hexagons of types 1 and 2.

\medskip

The following lemma is the main goal of this section.  We say that a horizontal edge of a $\sigma$-polygon is \emph{pierced} if its interior intersects $G(\sigma)$.  Also, we say that a $\sigma$-polygon is \emph{empty} if it there are no points of $G(\sigma)$ in its interior.

\begin{lemma}
\label{lemma:empty}
Suppose that $\sigma$ is a sequence of natural numbers in sawtooth form and that $G(\sigma)$ has an empty, unpierced box or an empty, unpierced  hexagon of type 1 or 2.  Then $\sigma$ is reducible.  
\end{lemma}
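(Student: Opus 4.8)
The plan is to realize the geometric shapes in the dot graph as templates for a \emph{simultaneous surgery} on the middle curves $\alpha_1,\dots,\alpha_{n-1}$, generalizing the two basic moves shown in Figure~\ref{F:idea}. Throughout I think of $\gamma$ as a properly embedded arc in a single complementary region $R$ of $\alpha_0\cup\alpha_n$, so that each time a curve $\alpha_j$ passes through $R$ it contributes exactly two points to $\alpha_j\cap(\alpha_0\cup\alpha_n)$, namely its two endpoints on $\partial R$. The aim of the surgery is to reduce the number of arcs of the $\alpha_j$ passing through $R$ near $\gamma$, which decreases the total intersection with $\alpha_0\cup\alpha_n$ and hence the complexity $\sum_k (i(v_0,v_k)+i(v_k,v_n))$, while producing curves $\alpha_j'$ that still form a path (that is, creating no intersection of the form $\alpha_i'\cap\alpha_{i+1}'$). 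Since each $\alpha_j$ is in minimal position with $\alpha_0$ and $\alpha_n$ and passing to minimal position only decreases the count, a strict drop in the representative counts $|\alpha_j'\cap(\alpha_0\cup\alpha_n)|$ will give a strictly smaller complexity for the new path.

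First I would translate the combinatorics of a box into arcs. The two ascending edges record two ``fans'' of the consecutive, hence pairwise disjoint, curves $\alpha_{y_1},\dots,\alpha_{y_1+d}$ crossing $\gamma$ in two nested families; for each height $j$ in this range the two crossings cobound a subarc $\gamma_j\subseteq\gamma$, and these subarcs are themselves nested. I would then surger $\alpha_{y_1},\dots,\alpha_{y_1+d}$ all at once, rerouting each $\alpha_j$ across a band running parallel to $\gamma_j$ and merging arcs of $\alpha_j\cap R$. Because the box is \emph{empty and unpierced}, the band region meets $G(\sigma)$ only along the two ascending edges, so the bands may be taken embedded, pairwise disjoint, and disjoint from every other $\alpha_i$; this is exactly what lets us operate on the whole fan simultaneously rather than on a single curve, which (as explained around Figure~\ref{F:idea}) is generally obstructed by a neighboring arc.

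The heart of the argument is checking that the surgered curves still form a path, i.e.\ that no new intersection $\alpha_i'\cap\alpha_{i+1}'$ is introduced. Inside the fan this is automatic, since neighboring curves are surgered along adjacent nested subarcs and so remain disjoint. The only danger is at the two extreme curves $\alpha_{y_1}$ and $\alpha_{y_1+d}$, where the surgered curve could run into the outside neighbors $\alpha_{y_1-1}$ and $\alpha_{y_1+d+1}$. This is precisely where the defining conditions of a box are used: the leftmost edge containing the highest point of an ascending segment means $\alpha_{y_1+d}$ ``turns around'' at the top of the fan, leaving room to reroute it without meeting $\alpha_{y_1+d+1}$, and dually the rightmost edge containing the lowest point of an ascending segment frees $\alpha_{y_1}$ from $\alpha_{y_1-1}$ at the bottom. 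Granting that the path is preserved, the complexity drops: the new portions of the $\alpha_j'$ run parallel to $\gamma\subseteq R$, which is disjoint from $\alpha_0\cup\alpha_n$, so they create no new crossings with $\alpha_0$ or $\alpha_n$, while the surgery strictly reduces the number of arcs of the involved curves through $R$ and hence the number of points of intersection with $\partial R\subseteq\alpha_0\cup\alpha_n$.

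Finally I would treat the two hexagon cases by the same scheme, with one modification: the move is no longer a straight band swap but a reroute that pivots around the \emph{middle ascending edge}, which the definition forces to be an entire ascending segment. The conditions that the middle edge shares its minimum with the leftmost edge (type~1) or its maximum with the rightmost edge (type~2), together with the corresponding top/bottom condition, play the same role as in the box case, guaranteeing that the rerouted arcs avoid the outside neighbors. I expect the main obstacle to be this verification that no $\alpha_i'\cap\alpha_{i+1}'$ intersections are created: it is where the interplay between the emptiness/unpierced hypotheses and the top/bottom/middle conditions must be made fully precise, and where the hexagon cases genuinely diverge from the box case and require a separate, more careful picture.
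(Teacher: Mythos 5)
Your global strategy---realize the box or hexagon as a simultaneous surgery on a fan of curves, note that the new bands run parallel to $\gamma$ and hence meet neither $\alpha_0$ nor $\alpha_n$, and conclude that the complexity strictly drops---is the paper's strategy, and your complexity count is correct. But the claim that carries your entire disjointness verification is false. For an empty, unpierced box, the defining conditions (together with sawtooth form) force the dots to be a \emph{consecutive} block of intersections along $\gamma$ of the form $\alpha_k,\dots,\alpha_{k+m},\alpha_k,\dots,\alpha_{k+m}$: a repetition, not a palindrome. Consequently the subarcs $\gamma_j$ joining the two crossings of a given curve are \emph{staggered}, not nested: $\gamma_{j+1}$ contains the right endpoint of $\gamma_j$ in its interior, and $\gamma_j$ contains the left endpoint of $\gamma_{j+1}$. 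Each band must therefore pass a retained strand of its neighboring curve, and for a bad choice among the two admissible surgeries on each curve the surgered neighbors \emph{do} intersect---this is exactly the obstruction illustrated on the right of Figure~\ref{F:idea}, where ``an arc of $v_2$ is in the way.'' So disjointness inside the fan is not automatic; it is the crux. The paper resolves it by coordinating the $\pm$ signs of consecutive surgeries (the second sign of the surgery on $\alpha_i$ must be opposite the first sign of the surgery on $\alpha_{i+1}$) and by observing that in the directed graph of Figure~\ref{F:digraph} every vertex has an outgoing edge to an even surgery and one to an odd surgery, so a consistent chain of choices exists no matter which parity each curve demands. Your proposal contains no such mechanism. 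Relatedly, you locate the danger in the wrong place: conditions (1) and (2) of the box definition are not there to let the extreme curves ``turn around'' past $\alpha_{k-1}$ and $\alpha_{k+m+1}$; they are what prove the consecutiveness above, and consecutiveness is precisely what makes disjointness from all \emph{unsurgered} curves automatic, since no other curve crosses $\gamma$ within the span of the box.

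For the hexagons you only assert that the same scheme with a ``pivot'' works, and you concede the verification is the main obstacle; that verification is where the paper's genuinely new idea lies, and it is not a band picture. In a type 1 hexagon the middle curve $\alpha_{k+\ell}$ meets the hexagon in \emph{three} points. One surgers it along the two leftmost of these, choosing the admissible surgery that \emph{discards} the strand through the third point; this removes exactly the strand that would obstruct the surgeries of $\alpha_{k+\ell+1},\dots,\alpha_{k+m}$ across the right-hand part of the hexagon. One then runs two sign-coordinated chains of surgeries, from $k+\ell$ down to $k$ and from $k+\ell$ up to $k+m$, and uses the condition that the dots below the lower-right edge have heights $j_i<k+\ell$, so the extra strands lying under the long bands belong only to curves that are not adjacent in the path, where new intersections are harmless. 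Finally, a smaller but real omission: you never check that the surgered curves are essential. The paper does this beforehand via a variant of the bigon criterion (using minimal position of the $\alpha_i$ with $\gamma$), and this is exactly the step that fails for surfaces with boundary---so it cannot be waved through.
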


Before we prove Lemma~\ref{lemma:empty}, we need to introduce another topological tool, surgery on curves.

\p{Surgery}  Let $\alpha$ be a simple closed curve in a surface and let $\gamma$ be an oriented arc so that $\alpha$ and $\gamma$ are in minimal position.  We can form a new curve $\alpha'$ from $\alpha$ by performing surgery along $\gamma$ as follows.  We first remove from $\alpha$ small open neighborhoods of two points of $\alpha \cap \gamma$ that are consecutive along $\gamma$.  What remains of $\alpha$ is a pair of arcs; we can connect the endpoints of either arc by another arc $\delta$ that lies in a small neighborhood of $\gamma$ in order to create the new simple closed curve $\alpha'$ (the other arc of $\alpha$ is discarded); see Figure~\ref{F:surgery}. 

We draw a neighborhood of $\gamma$ in the plane so that $\gamma$ is a horizontal arc oriented to the right.   We say that $\alpha'$ is obtained from $\alpha$ by $++$, $+-$, $-+$, or $--$ surgery along $\gamma$; the first symbol is $+$ or $-$ depending on whether the first endpoint of $\delta$ (as measured by the orientation of $\gamma$) lies above $\gamma$ or below, and similarly for the second symbol.

In general, for a given pair of intersection points of a curve $\alpha$ with $\gamma$, exactly two of the four possible surgeries result in a simple closed curve.  If we orient $\alpha$, then the two intersection points of $\alpha$ with $\gamma$ can either agree or disagree.  If they agree, then the $+-$ and $-+$ surgeries, the \emph{odd surgeries}, result in a simple closed curve, and if they disagree, the $++$ and $--$ surgeries, the \emph{even surgeries}, result in a simple closed curve.  

\begin{figure}[h!]
\labellist
\small\hair 2pt
\pinlabel $\alpha$ at 16 96
\pinlabel $\alpha$ at 74 96
\pinlabel $\gamma$ at 46 31
\pinlabel \fbox{\tiny $++$} at 185 105
\pinlabel \fbox{\tiny $--$} at 283 105
\pinlabel \fbox{\tiny $+-$} at 376 105
\pinlabel \fbox{\tiny $-+$} at 472 105
\endlabellist \vspace*{.25in}
	\centerline{\includegraphics[width=\textwidth]{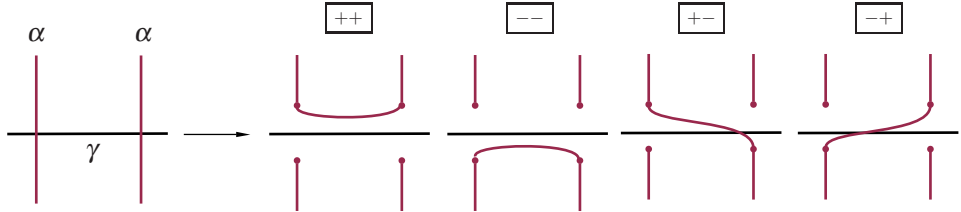}}
	\caption{The four types of surgery on a curve along an arc}
	\label{F:surgery}
\end{figure}

These surgeries will of course only be of use to us if the curve $\alpha'$ is an essential simple closed curve in $S$.  One variant of the well-known bigon criterion is that a curve $\alpha$ and an arc $\gamma$ are in minimal position if and only if every closed curve formed from $\alpha$ and $\gamma$ as above is essential.  Indeed, the proof in the case where $\alpha$ and $\gamma$ are both curves (see \cite[Proposition 3.10]{FLP}) can be adapted to this case.  Thus our $\alpha'$ is essential.

\p{Surfaces with boundary}\label{fail}  In order to show that our surgered curves are essential, we used a version of the bigon criterion.  This bigon criterion is exactly what fails in the case of surfaces with boundary.  For instance, suppose that the surface $S$ has at least two boundary components and consider a simple closed curve $\alpha$ that cuts off a pair of pants in $S$.  If $\gamma$ is an arc that intersects $\alpha$ in two points then both of the curves obtained by surgering $\alpha$ along $\gamma$ are homotopic to components of the boundary of $S$, neither of which represents a vertex of $\C(S)$.

\bigskip

We now use the surgeries described above to prove that a dot graph with an empty, unpierced box or an empty, unpierced hexagon of type 1 or 2 corresponds to a sequence that is reducible.

\begin{proof}[Proof of Lemma~\ref{lemma:empty}]

Suppose that $\sigma$ appears as an intersection sequence for a reference arc $\gamma$ for a set of standard representatives $\alpha_0, \dots, \alpha_n$ for a path $v_0,\dots,v_n$ in $\C(S_g)$.  We need to replace the $\alpha_{i}$ with new curves $\alpha_{i}'$ so that the resulting path from $v_0$ to $v_n$ has smaller complexity.  We treat the three cases in turn, according to whether $G(\sigma)$ has an empty, unpierced box or an empty, unpierced hexagon of type 1 or 2.  

Suppose $G(\sigma)$ has an empty, unpierced box $P$.  By the definitions of sawtooth form and empty boxes there are no ascending edges of $G(\sigma)$ in the vertical strip between the two ascending edges of $P$, that is, the dots of $P$ correspond to a consecutive sequence of intersections along $\gamma$:
\[ \alpha_k, \dots, \alpha_{k+m}, \ \ \alpha_k, \dots, \alpha_{k+m} \]
where $1 \leq k \leq k+m \leq n-1$.

\begin{figure}[htbp!]
\labellist
\small\hair 2pt
\pinlabel $3$ at 39 224
\pinlabel $4$ at 69 224
\pinlabel $5$ at 99 224
\pinlabel $3$ at 125 224
\pinlabel $4$ at 157 224
\pinlabel $5$ at 187 224
\pinlabel $3'$ at 347 227
\pinlabel $4'$ at 380 227
\pinlabel $5'$ at 494 227
\pinlabel $5$ at 160 78
\pinlabel {\tiny $-+$} at 285 84
\pinlabel $4$ at 160 52
\pinlabel {\tiny $++$} at 265 58
\pinlabel $3$ at 160 27
\pinlabel {\tiny $+-$} at 244 32
\endlabellist
	\centerline{\includegraphics[width=1.0\textwidth]{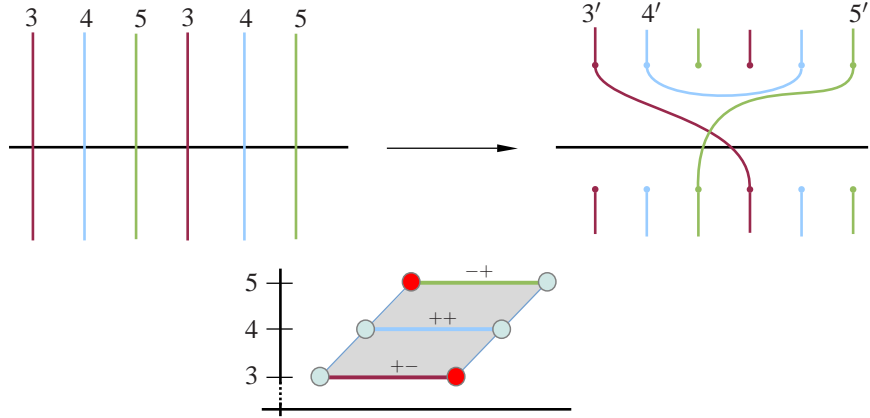}}
	\caption{{\small An example of a set of surgeries as in the box case of Lemma~\ref{lemma:empty}}}
	\label{F:repetition}
\end{figure}

First, for $i \notin \{k , \dots, k + m\}$ we set $\alpha_i'=\alpha_i$.    We then define $\alpha_k',\dots, \alpha_{k+m}'$ inductively: for $i=k,\dots, k+m$, the curve $\alpha_{i}'$ is obtained by performing surgery along $\gamma$ between the two points of $\alpha_{i} \cap \gamma$ corresponding to dots of $P$ and the surgeries are chosen so that they form a path in the directed graph in Figure~\ref{F:digraph} (of course for each $i$ we must choose one of the two surgeries that results in a closed curve).

\begin{figure}[h!]
\labellist
\small\hair 2pt
\pinlabel \fbox{\tiny $++$} at 65 108
\pinlabel \fbox{\tiny $+-$} at 32 32
\pinlabel \fbox{\tiny $-+$} at 130 75
\pinlabel \fbox{\tiny $--$} at 98 0
\endlabellist
	\centerline{\includegraphics{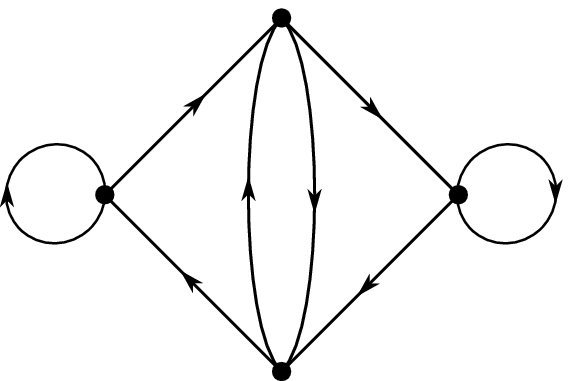}}
	\caption{The directed graph used in the proof of Lemma~\ref{lemma:empty}}
\label{F:digraph}
\end{figure}

The vertices of the graph in Figure~\ref{F:digraph} correspond to the four types of surgeries: $++$, $+-$, $-+$, and $--$, and the rule is that the second sign of the origin of a directed edge is the opposite of the first sign of the terminus.  Since every vertex has one outgoing arrow pointing to an even surgery and one outgoing arrow pointing to an odd surgery, the desired sequence of surgeries exists; in fact it is completely determined by the choice of surgery on $\alpha_k$, and so there are exactly two possible sequences.  See Figure~\ref{F:repetition} for an example of this procedure; there we perform $+-$ surgery on $\alpha_{3}$, then $++$ surgery on $\alpha_{4}$, then $-+$ surgery on $\alpha_{5}$.  

For $0 \leq i \leq n$, let $v_i'$ be the vertex of $\C(S_g)$ represented by $\alpha_i'$.  We need to check that the $v_i'$ certify the reducibility of $\sigma$, namely that
\begin{enumerate}
 \item  $v_0'=v_0$ and $v_n' = v_n$,
 \item each $v_i'$ is connected to $v_{i+1}'$ by an edge in $\C(S_g)$, and 
 \item  the complexity of $v_0',\dots,v_n'$ is strictly smaller than that of $v_0,\dots,v_n$.
\end{enumerate}
The first condition holds because $1 \leq k \leq k+m \leq n-1$.  The second condition holds because each intersection $\alpha_i \cap \alpha_{i+1}$ is empty and the surgeries do not create new  intersections.    For the third condition, we claim that something stronger is true, namely, that
\[ i(v_0,v_i') + i(v_i',v_n) \leq i(v_0,v_i) + i(v_i,v_n) \]
for all $i$ and that
\[ i(v_0,v_k') + i(v_k',v_n) < i(v_0,v_k) + i(v_k,v_n). \]
Indeed, if we consider the polygonal decomposition of $S_g$ determined by $\alpha_0 \cup \alpha_n$ we see that when we surger two strands of some $\alpha_i$ along $\gamma$ we create no new intersections with $\alpha_0 \cup \alpha_n$ and we remove two intersections with $\alpha_0 \cup \alpha_n$ (we might also create a bigon, but this would only help our case).  Since we performed at least one surgery---on $\alpha_k$---our claim is proven.  

\medskip

\begin{figure}[htbp!]
\labellist
\small\hair 2pt
\pinlabel $3$ at 31 278
\pinlabel $4$ at 45 278
\pinlabel $5$ at 59 278
\pinlabel $6$ at 73 278
\pinlabel $7$ at 87 278
\pinlabel $3$ at 101 278
\pinlabel $4$ at 115 278
\pinlabel $5$ at 129 278
\pinlabel $2$ at 144 278
\pinlabel $3$ at 158 278
\pinlabel $4$ at 172 278
\pinlabel $5$ at 186 278
\pinlabel $6$ at 200 278
\pinlabel $7$ at 214 278
\pinlabel $5'$ at 360 278
\pinlabel $6'$ at 374 278
\pinlabel $3'$ at 402 278
\pinlabel $7'$ at 516 278
\pinlabel \begin{rotate}{-20}\textcolor{red}{\Large $\boldsymbol{\times}$}\end{rotate}  at 475 262
\pinlabel $4'$ at 374 172
\pinlabel {\tiny $\mplus$} at 292 119
\pinlabel $7$ at 122 114
\pinlabel {\tiny $++$} at 280 102
\pinlabel $6$ at 122 97
\pinlabel {\tiny $\pminus$} at 229 85
\pinlabel $5$ at 122 79
\pinlabel \begin{rotate}{-20}\textcolor{red}{\Large $\boldsymbol{\times}$}\end{rotate} at 327 76
\pinlabel {\tiny $\mminus$} at 217 68
\pinlabel $4$ at 122 63
\pinlabel {\tiny $\mplus$} at 198 51
\pinlabel $3$ at 122 47
\pinlabel $2$ at 122 32
\endlabellist
	\centerline{\includegraphics[width=1.0\textwidth]{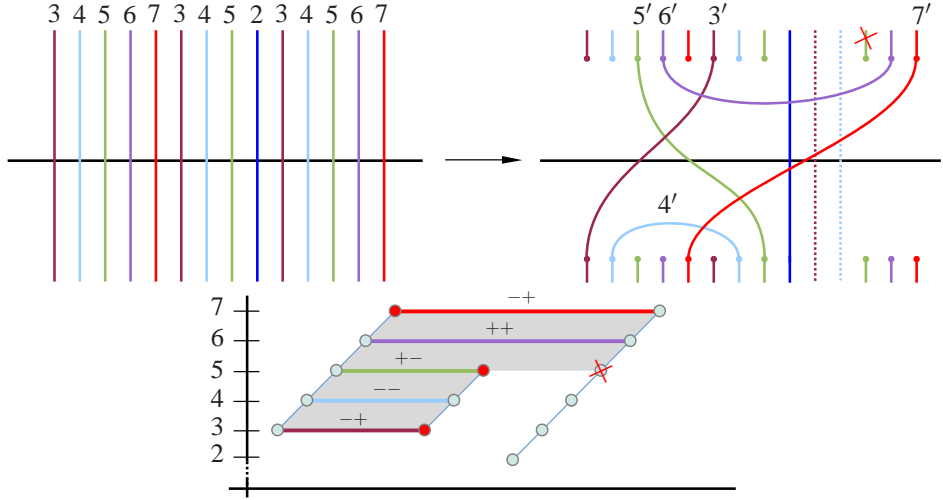}}
	\caption{{\small An example of a set of surgeries as in the hexagon case of Lemma~\ref{lemma:empty}}}
	\label{F:glock surgery}
\end{figure}

The cases of empty, unpierced hexagons of types 1 and 2 are similar, but one new idea is needed.  These two cases are almost identical, and so we will only treat the first case, that is, we suppose $G(\sigma)$ has an empty, unpierced hexagon $P$ of type 1.  By the definition of sawtooth form and the definition of an empty, unpierced hexagon of type 1, there are no ascending segments of $G(\sigma)$ in the vertical strip between the leftmost and middle ascending edges of $P$ and any ascending segments of $G(\sigma)$ that lie in the vertical strip between the middle and rightmost ascending segments have their highest point strictly below the lower-right horizontal edge of $P$.  It follows that the dots of $P$ correspond to a sequence of intersections along $\gamma$ of the following form
\[ \alpha_k, \dots, \alpha_{k+m}, \ \ \alpha_k, \dots, \alpha_{k+\ell},
\ \ \alpha_{j_1}, \dots, \alpha_{j_p}, \ \   \alpha_{k+\ell}, \dots, \alpha_{k+m} \]
where $1 \leq k \leq k+\ell \leq k+m \leq n-1$, $p \geq 0$, and each $j_i <  \alpha_{k+\ell}$.  See Figure~\ref{F:glock surgery} for an example where $k=3$, $\ell=2$, $m=4$, and $p=0$.

Again, for each $\alpha_i$ with $i \notin \{k , \dots, k + m\}$ we set $\alpha_i'=\alpha_i$.  Each of the remaining $\alpha_i$ corresponds to exactly two dots in $P$ except for $\alpha_{k+\ell}$, which corresponds to three.  Let $\alpha_{k+\ell}'$ be the curve obtained from $\alpha_{k+\ell}'$ via surgery along $\gamma$ between the first two (leftmost) points of $\alpha_{k+\ell}' \cap \gamma$ corresponding to dots of $P$ and satisfying the following property: $\alpha_{k+\ell}'$ does not contain the arc of $\alpha_{k+\ell}$ containing the third (rightmost) point of $\alpha_{k+\ell} \cap \gamma$ corresponding to a dot of $P$.  As always, there are two choices of surgery given two consecutive points of $\alpha_{k+\ell} \cap \gamma$; one contains this third intersection point and one does not.

We then define $\alpha_{k+\ell-1}', \dots, \alpha_k'$ inductively as before using the diagram above (notice the reversed order), and finally we define $\alpha_{k+\ell+1}',\dots, \alpha_{k+m}'$ inductively as before.  

By our choice of $\alpha_{k+\ell}'$, we have that $\alpha_{k+\ell}' \cap \alpha_{k+\ell+1}' = \emptyset$, as required; indeed, we eliminated the strand of $\alpha_{k+\ell}'$ that was in the way between the two strands of $\alpha_{k+\ell+1}$ being surgered.  Also, since each $j_i$ is strictly less than $k+\ell$, the curves $\alpha_{k+\ell+1}',\dots, \alpha_{k+m}'$ satisfy the condition that $\alpha_i' \cap \alpha_{i+1}' = \emptyset$.  The other conditions in the definition of a reducible sequence are easily verified as before.  This completes the proof of the lemma.
\end{proof}

%%%
%%%
%%%

\subsection{Stage 3: Innermost polygons}
\label{section:stage 3}

In this section we will put together Lemmas~\ref{L:sawtooth} and~\ref{lemma:empty} in order to prove Proposition~\ref{prop:reducible}.  We begin with two lemmas.

\begin{lemma}
\label{lemma:almost box}
If a dot graph $G(\sigma)$ contains a box $P$ pierced in exactly one edge, then it contains an unpierced box.
\end{lemma}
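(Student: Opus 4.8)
The plan is to argue by induction while steadily reducing the piercing, exploiting the crucial fact that every ascending segment of $G(\sigma)$ has slope $1$, which is exactly the slope of the two ascending edges of a box. First I would normalize by symmetry. The point reflection $(x,y)\mapsto(-x,-y)$ (up to translation) corresponds to reversing both the reference arc and the path; one checks directly that it preserves sawtooth form, sends ascending segments to ascending segments, interchanges the two horizontal edges of a box, and interchanges conditions (1) and (2) in the definition of a box, while preserving the property of being pierced. Hence I may assume that the single pierced edge of $P$ is its bottom edge, lying at some height $p$, with the top edge unpierced.

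The key observation is that an ascending segment meeting the interior of $P$ cannot cross either ascending side of $P$, being parallel to both; it can enter or leave $\overline{P}$ only through the two horizontal edges. Since the top edge is unpierced, any ascending segment that meets the interior of the bottom edge and rises into $P$ must attain its highest dot strictly inside $P$ --- otherwise it would continue upward and meet the interior of the top edge. Thus the piercing produces a dot that is the top of an ascending segment and lies strictly inside $P$. Using this interior top I would carve out a strictly smaller box as follows. Let $(c,p)$ be the \emph{rightmost} point at which an ascending segment rises into $P$ through the bottom edge, and let $(c+t,p+t)$, with $1\le t<h$ (here $h$ is the height of $P$), be the top of that segment. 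Take $P'$ to be the parallelogram whose left ascending edge runs along this segment from $(c,p)$ up to $(c+t,p+t)$, and whose right ascending edge runs along the right side $R$ of $P$ from its bottom corner $B$ up to height $p+t$. The top-left corner of $P'$ is then a genuine top and its bottom-right corner is $B$, a genuine bottom, so $P'$ satisfies conditions (1) and (2); moreover $P'\subseteq\overline{P}$, it has height $t<h$, and its bottom edge is the part of the bottom edge of $P$ lying to the right of $(c,p)$.

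I would then induct on the number of dots of $G(\sigma)$ contained in the closed box. Once a box has the fewest enclosed dots --- equivalently, once one has reached height one --- it admits no interior crossing and is therefore unpierced, which gives the base case; and the construction above produces, from any box pierced in exactly one edge, a box $P'$ pierced in at most one edge and enclosing strictly fewer dots (it omits, for instance, the top-left corner of $P$). The main obstacle, and the step demanding the most care, is exactly the bookkeeping that keeps $P'$ pierced in \emph{at most} one edge: one must verify that the \emph{rightmost}-crossing choice leaves the new bottom edge unpierced, that only the top edge of $P'$ can survive as a pierced edge, that the genuine-top and genuine-bottom conditions really are inherited by the two ascending edges of $P'$, and that the enclosed-dot count strictly drops. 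The rightmost-crossing device is what controls the new bottom edge, and the symmetry reduction of the first paragraph disposes of the top-edge case without repeating the argument, so that the induction terminates at an unpierced box.
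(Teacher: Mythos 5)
Your overall strategy---choose a crossing of the pierced edge, cut off a strictly smaller box pierced in at most one edge, and iterate---is the same as the paper's, and your symmetry reduction to the case of a pierced bottom edge is legitimate. But there is a genuine gap: you assume that every piercing of the bottom edge comes from an ascending segment that \emph{rises into} $P$ (you insist $1\le t$). A dot in the interior of the bottom edge can instead be the \emph{highest} point of its own ascending segment, arriving from below the box, or even an isolated dot (a degenerate ascending segment); in that case it produces no top ``strictly inside $P$,'' so your ``Thus'' is a non sequitur. Concretely, take the sawtooth sequence $\sigma=(5,6,7,8,\,2,3,4,5,\,5,6,7,8)$ and the box $P$ with vertices $(1,5)$, $(9,5)$, $(12,8)$, $(4,8)$: its ascending edges lie on the first and third ascending runs and satisfy conditions (1) and (2), its top edge is unpierced, and its bottom edge is pierced exactly once, by the dot $(8,5)$, which is the \emph{top} of the run $2,3,4,5$. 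No ascending segment rises into $P$ at all, so your $(c,p)$ does not exist and the induction cannot start. The failure is not cosmetic: in this dot graph the only unpierced box is the degenerate one with ascending edges the single dots $(8,5)$ and $(9,5)$, which your construction (producing only boxes of height $t\ge 1$) can never output. A second symptom of the same oversight: even when some rising crossing exists, taking the rightmost \emph{rising} crossing does not control the new bottom edge, since non-rising piercing dots may lie to its right; then $P'$ is pierced in both horizontal edges and the ``pierced in at most one edge'' invariant that drives your induction is lost.

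The repair is exactly the paper's proof: let $(c,p)$ be the rightmost point of $G(\sigma)$ in the interior of the pierced edge (of any kind), and let $t\ge 0$---not $t\ge 1$---measure how far its ascending segment continues above height $p$. The unpierced top edge still forces $t<h$, and your parallelogram $P'$, with left ascending edge from $(c,p)$ to $(c+t,p+t)$ and right ascending edge along the right side of $P$, works verbatim, except that its left ascending edge may be degenerate (a single dot, when $t=0$). This is allowed: in the definition of dot graph polygons only the horizontal edges are required to have nonzero length, and the paper itself invokes a ``possibly degenerate'' box in the proof of Proposition~\ref{prop:reducible}; condition (1) for $P'$ still holds because $(c+t,p+t)$ is the highest point of its segment even when $t=0$. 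With that choice the new bottom edge really is unpierced, condition (2) is inherited from the bottom-right corner of $P$ as you argued, and your dot-count induction terminates. One further small correction: your base case claim that a box of height one ``admits no interior crossing and is therefore unpierced'' is false---piercing concerns the horizontal edges, and a height-one box can be pierced by a dot interior to them---but this is harmless, since termination already follows from the strict decrease in the number of enclosed dots.
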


\begin{proof}

Denote the ascending edges of $P$ by $e$ and $f$.  There is an ascending segment $e'$ intersecting the interior of exactly one of the two horizontal edges of $P$; we choose $e'$ to be rightmost if it intersects the bottom edge of $P$ and leftmost if it intersects the top edge.  Either way, we find a box $P'$ pierced in at most one edge and where one ascending edge is contained in $e'$ and the other ascending edge is contained in $P$.  The box $P'$ has horizontal edges strictly shorter than those of $P$.  Therefore, we may repeat the process until it eventually terminates, at which point we find the desired unpierced box.
\end{proof}

\begin{lemma}
\label{lemma:innermost}
Among all unpierced boxes and hexagons of type 1 and 2 in a dot graph $G(\sigma)$, an innermost unpierced box or hexagon of type 1 or 2 is empty.
\end{lemma}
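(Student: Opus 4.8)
The plan is to argue by contradiction, interpreting \emph{innermost} as minimal with respect to containment among unpierced boxes and hexagons of type $1$ and $2$ (since $G(\sigma)$ has finitely many dots there are finitely many such polygons, so a minimal one exists). Let $P$ be such a minimal polygon and suppose, for contradiction, that $P$ is not empty; the goal is to produce an unpierced box or hexagon of type $1$ or $2$ strictly contained in $P$. The starting observation is that, because $P$ is unpierced, any ascending segment of $G(\sigma)$ meeting the interior of $P$ lies \emph{entirely} in the interior of $P$: such a segment has slope $1$, hence is parallel to the ascending edges of $P$ and could only exit through the interior of a horizontal edge, which would pierce it; also its endpoints, lying on a line strictly between the bounding ascending lines, cannot be vertices of $P$. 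As $P$ is non-empty I may therefore fix an ascending segment $s$ lying strictly inside $P$, with top $t$ and bottom $\ell$ at heights $t_y \ge \ell_y$. Note that $t$ is the highest point of $s$ and $\ell$ its lowest, and that $s$ is itself an entire ascending segment, so it is eligible to serve as a middle edge of a type $1$ hexagon.

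I would first treat the case in which $P$ is a box, with ascending edges on a left line $L$ and a right line $R$, and choose $s$ to be the interior segment closest to $L$. I then build a type $1$ hexagon $P'$ whose leftmost edge is the part of the $L$-edge from height $\ell_y$ to the top of $P$, whose middle edge is all of $s$, and whose rightmost edge is the part of the $R$-edge from height $t_y$ to the top of $P$, the three horizontal edges joining these at heights $\ell_y$, $t_y$, and the top of $P$. One checks directly that $P'$ satisfies the definition of a type $1$ hexagon (the leftmost edge contains the top of its segment, the middle edge is an entire segment, and the minima of the leftmost and middle edges agree at $\ell_y$), that $P' \subsetneq P$, and that the construction is uniform even when $s$ is a single dot, in which case $P'$ is an admissible degenerate hexagon. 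Its top horizontal edge is inherited from that of $P$ and so is unpierced, and its lower-left horizontal edge is unpierced because, $s$ having been chosen closest to $L$, no dot lies in the interior of $P$ strictly between $L$ and the line of $s$. Thus $P'$ is pierced in at most one edge, its middle horizontal edge.

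When $P$ is itself a hexagon of type $1$ or $2$ the construction is the same in spirit—an interior segment chosen closest to one of the bounding ascending lines becomes a new middle edge—but it splits into sub-cases according to whether the chosen segment lies in the lower or the upper portion of the staircase, and in each case one verifies that the region cut out is again a box or a type $1$ or $2$ hexagon that is strictly smaller than $P$ and pierced in at most one horizontal edge. It then remains to remove this last possible piercing. A box pierced in at most one edge already contains an unpierced box by Lemma~\ref{lemma:almost box}. For a hexagon pierced in exactly one horizontal edge the identical shrinking argument applies: one takes an ascending segment piercing that edge and, exactly as in the proof of Lemma~\ref{lemma:almost box}, slides along it to cut off a box or hexagon with a strictly shorter pierced horizontal edge, iterating until an unpierced box or hexagon remains. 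This unpierced polygon lies in $P'$, hence strictly inside $P$, contradicting minimality; therefore $P$ is empty.

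I expect the hexagon case of the construction to be the main obstacle. The bookkeeping needed to confirm that the region cut out by an interior segment is \emph{exactly} one of the admissible shapes (box, type $1$ hexagon, type $2$ hexagon), and that all but possibly one of its horizontal edges are unpierced, requires a careful case analysis on the position of the chosen segment relative to the middle ascending edge of $P$, together with attention to the degenerate configurations. By contrast the ``entirely interior'' observation and the final un-piercing step are routine, the latter being a direct adaptation of Lemma~\ref{lemma:almost box}.
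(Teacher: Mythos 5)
Your setup (innermost interpreted as minimal under containment, the finiteness remark, and the observation that an ascending segment meeting the interior of an unpierced polygon lies entirely in that interior) is sound, and your type 1 hexagon $P'$ built from the interior segment $s$ closest to $L$ is a genuine $\sigma$-hexagon whose top and lower-left horizontal edges are unpierced. The gap is precisely in the step you dismiss as routine: removing the possible piercing of the remaining (lower-right) horizontal edge of $P'$. That edge sits at the height of $\max(s)$, and a segment $e'$ piercing it is completely uncontrolled by your choice of $s$: it may extend above $\max(s)$ and, crucially, below $\min(s)$. When $\min(e') < \min(s)$ there is no $\sigma$-box with ascending edges in $s$ and $e'$ (such a box would have to contain $\max(s)$ in its left edge and $\min(e')$ in its right edge, hence span the heights $[\min(e'),\max(s)]$, which do not fit inside $s$), and no type 1 or type 2 hexagon has those two sides either; pairing $e'$ with the portions of $L$ or $R$ on $\partial P'$ fails for the same reason, since $e'$ reaches neither the top nor the bottom of $P$. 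So there is no ``identical shrinking argument'': Lemma~\ref{lemma:almost box} is a statement about boxes, proved using the box conditions, and the configuration you are left with does not reduce to it. Any repair is forced to use parts of $\partial P$ outside $P'$ (for instance a type 2 hexagon with left edge on $L$ below height $\min(s)$, middle edge $s$, and right edge in $e'$), which already falsifies your closing claim that the final unpierced polygon ``lies in $P'$'' (it need not, even in the easy case; what saves the contradiction is only that it lies strictly inside $P$), and it destroys your termination measure: the successive pierced horizontal edges lie at different heights between different pairs of ascending lines, so ``strictly shorter pierced horizontal edge'' is not monotone along the iteration.

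The paper sidesteps all of this with a different extremal choice, and this is the idea your proposal is missing: take the interior segment $e$ whose maximum $\max(e)$ is largest, and rightmost among those. With that choice the top edge and the lower-\emph{right} edge of the resulting type 1 hexagon are automatically unpierced (a piercing of the lower-right edge would contradict maximality or rightmostness), and any segment $e'$ piercing the lower-\emph{left} edge satisfies $\max(e') \le \max(e)$ --- exactly the inequality guaranteeing that $e'$ and $e$ span a $\sigma$-box, over the heights $[\min(e),\max(e')]$, that is pierced in at most one edge, so Lemma~\ref{lemma:almost box} finishes. Note also that your case where $P$ is itself a hexagon is asserted rather than proved; there the paper needs sawtooth form (to rule out interior segments to the right of the middle edge $m$) and a genuine dichotomy, $\max(e) > \max(m)$ versus $\max(e) \le \max(m)$, producing a type 1 or a type 2 hexagon respectively --- none of which appears in your sketch. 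Your plan can likely be completed, but only after replacing ``closest to $L$'' by ``largest maximum, rightmost,'' at which point it becomes the paper's proof.
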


\begin{proof}

We treat the three cases separately.  First suppose that $P$ is an unpierced box that is not empty.  We will show that $P$ either contains another unpierced box or an unpierced hexagon of type 1.  Let $e$ be an ascending segment contained in the interior of $P$.  We choose $e$ so that $\max(e)$ is maximal among all such ascending segments, and we further choose $e$ to be rightmost among all ascending segments with maximum equal to $\max(e)$.  

\begin{figure}[h!]
	\centerline{\includegraphics[width=1\textwidth]{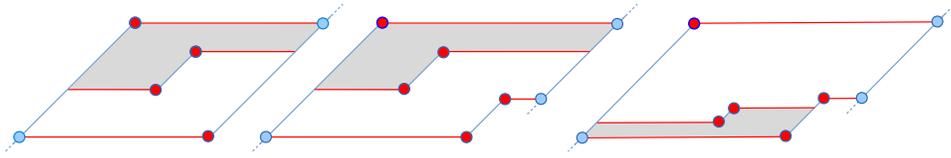}}
	\caption{Inside a box, inside a hexagon, inside a hexagon}
	\label{F:inside}
\end{figure}

There is a unique (possibly degenerate) hexagon $P'$ of type 1 with one edge equal to $e$, and the other two edges contained in the ascending edges of $P$; see the left-hand side of Figure~\ref{F:inside}.  If $P'$ is unpierced, we are done, so assume that 
$P'$ is pierced.  By construction, the top horizontal edge of $P'$ and the lower-right horizontal edge of $P'$ are unpierced.
Suppose that the interior of the lower-left horizontal edge of $P'$ were pierced.  Let $e'$ be the rightmost ascending segment of $G(\sigma)$ that pierces this edge of $P'$.  By the choice of $e$, we have that $\max(e') \leq \max(e)$, and so there is a box pierced in at most one edge whose ascending edges are contained in $e'$ and $e$.  By Lemma~\ref{lemma:almost box},  there is an unpierced box contained in this pierced box, and so $P$ is not innermost.

\medskip

The second case is where $P$ is an unpierced hexagon of type 1.  Again suppose that $P$ is not empty.  Let $e$ be an ascending segment contained in the interior of $P$ that has the largest maximum $\max(e)$ over all such segments and is rightmost among all such ascending segments.  Let $m$ denote the middle ascending edge of $P$.  It follows from the fact that $\sigma$ is in sawtooth form that there are no ascending segments of $G(\sigma)$ that lie inside $P$ and to the right of $m$; so $e$ lies to the left of $m$.  We now treat two subcases, depending on whether $\max(e) > \max(m)$ or not.

If $\max(e) > \max(m)$, there is a maximal hexagon $P'$ of type 1 with ascending edges contained in $P \cup e$ as in the middle picture of Figure~\ref{F:inside}.  By the same argument as in the previous case, $P'$ is either unpierced or it contains an unpierced box.  

If $\max(e) \leq \max(m)$, the argument is similar.  There is a hexagon $P'$ of type 2 as shown in the right-hand side of Figure~\ref{F:inside}.  The topmost edge of $P'$ is unpierced by the choice of $e$.  The bottom edge of $P'$ is unpierced since it is a horizontal edge for $P$, which is unpierced.  And if the third horizontal edge of $P'$ were pierced, we could find a box pierced in at most one edge, hence an unpierced box, as in the previous cases.  It follows that $P'$ is unpierced and again  $P$ is not innermost.

\medskip

The third and final case is where $P$ is an unpierced hexagon of type 2.  This is completely analogous to the previous case; in fact, if we rotate the two pictures from the type 1 case by $\pi$ we obtain the required pictures for the type 2 case. 
\end{proof}

We can now use the two previous lemmas to prove Proposition~\ref{prop:reducible}.

\begin{proof}[Proof of Proposition~\ref{prop:reducible}]

Let $\sigma$ be a sequence of elements of $\{1,\dots,n-1\}$.  By Lemma~\ref{L:sawtooth} we may assume that $\sigma$ is in sawtooth form without changing the number of entries equal to 1; call this number $k$.  Let $e_1, \dots e_k$ denote the ascending segments of $G(\sigma)$ with minimum equal to 1, ordered from left to right.

If $\max(e_{i+1}) < \max(e_i)$ for all $i$, then since $\max(e_1) \leq n-1$ it follows that $k \leq n-1$.  Therefore, it suffices to show that if $\max(e_{i+1}) \geq \max(e_i)$ for some $i$ then $\sigma$ is reducible.

Suppose then that $\max(e_{i+1}) \geq \max(e_i)$ for some $i$.  The first step is to show that $G(\sigma)$ has an unpierced box.  Let $e$ be the first ascending segment (from left to right) that appears after $e_i$ and has $\max(e) \geq \max(e_i)$.  Because $\min(e) \geq \min(e_i) = 1$, there is evidently a (possibly degenerate) box $P$ with two edges contained in $e_i$ and $e$ and two horizontal edges with heights $\min(e)$ and $\max(e_i)$.  By the definition of $e$, the interior of the upper horizontal edge of $P$ is disjoint from $G(\sigma)$, so $P$ is pierced in at most one edge.  By Lemma~\ref{lemma:almost box}, $P$ contains an unpierced box.

Let $P$ now be an innermost unpierced box or hexagon of type 1 or 2; such $P$ exists because each $\sigma$-polygon contains a finite number of dots of $G(\sigma)$ and a polygon contained inside another polygon contains a fewer number of dots.  By Lemma~\ref{lemma:innermost}, the polygon $P$ is empty.  By Lemma~\ref{lemma:empty}, $\sigma$ is reducible.
\end{proof}

%%%
%%%
%%%

\subsection{From initially efficient geodesics to efficient geodesics} At this point we have established the existence of initially efficient geodesics (Proposition~\ref{P:initial}).  It remains to establish the existence of efficient geodesics (Theorem~\ref{T:simplify}).  

\p{Total complexity} For an oriented path $q$ in $\C(S_g)$ with vertices $w_0,\dots,w_n$ define the complexity $\kappa(q)$ as before:
\[ \kappa(q) = \sum_{k=1}^{n-1} \left(i(w_0,w_k) + i(w_k,w_n)\right).\]
Next, for an oriented path $p$ with vertices $v_0,\dots,v_n$, let $p_1$ be the oriented path $v_n,\dots,v_{n-3}$ and let $p_k$ be the oriented path $v_{n-k-1},\dots,v_n$ for $2 \leq k \leq n-1$.  
We will relabel the vertices of $p_k$ as $w_0,\dots,w_{n_k}$.  
The \emph{total complexity} of a path $p$ is the ordered $(n-1)$-tuple:
\[ \hat \kappa(p) = (\kappa(p_1),\dots,\kappa(p_{n-1})).\]
We order the set $\N^{n-1}$---hence the set of total complexities---lexicographically.

\begin{proof}[Proof of Theorem~\ref{T:simplify}]

Let $v$ and $w$ be vertices of $\C(S_g)$ with $d(v,w) \geq 3$.  We claim that any geodesic from $v$ to $w$ that has minimal total complexity must be efficient.

Let $p$ be an arbitrary geodesic $v=v_0,\dots,v_n=w$ and assume that $p$ is not efficient.  In other words, one of the corresponding paths $p_k$ with vertices $w_0,\dots,w_{n_k}$ is not initially efficient.  This is the same as saying that there is a set of representatives $\beta_0,\beta_1,\beta_{n_k}$ for $w_0,w_1,w_{n_k}$ that are in minimal position and a reference arc $\gamma$ with $|\beta_1 \cap \gamma| > n_k-1$.

As in the proof of Proposition~\ref{P:initial} we can extend the triple $\beta_0,\beta_1,\beta_{n_k}$ to a full standard set of representatives $\beta_0,\dots,\beta_{n_k}$ for $p_k$.  And as in that proof there are surgeries that reduce the complexity of $p_k$.  The curves obtained by these surgeries not only give a new path between the endpoints of $p_k$, but they also give rise to a new path between $v$ and $w$.

The key observation here is that, by our choice of the order of the $p_i$, the surgeries used in modifying $p_k$ do not increase the complexity of any $p_i$ with $i < k$.  Indeed, these surgeries do not increase the intersection between any of the curves $\beta_0,\dots,\beta_{n_k}$ and all of the vertices of $p$ used in the computation of $\kappa(p_i)$ with $i < k$ are already vertices of $p_k$, namely, the vertices represented by $\beta_0,\dots,\beta_{n_k}$.  The theorem follows.
\end{proof}

\subsection{An improved algorithm in a special case} We end this section by stating and proving the alternate version of the efficient geodesic algorithm that was used in the example at the start of Section~\ref{subsec:example}.  This proposition is equivalent to the main theorem (Theorem 1.1) of the first version of this paper \cite{v1}.  

\begin{proposition}
 \label{P:improvement}
Suppose $v$ and $w$ are vertices of $\C(S_g)$ with $d(v,w) \geq 3$.  Let $\alpha$ and $\beta$ be representatives of $\alpha$ and $\beta$ that are in minimal position.  Then there is a geodesic $v=v_0,\dots,v_n=w$ and a representative $\alpha_1$ of $v_1$ so that the number of intersections of $\alpha_1$ with each arc of $\beta\setminus \alpha$ is at most $d(v,w)-2$.
\end{proposition}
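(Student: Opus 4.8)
The plan is to obtain the sharper bound directly from the reducibility machinery of Proposition~\ref{prop:reducible}, by exploiting a feature special to reference arcs that run parallel to an arc of $\beta\setminus\alpha$. Set $\alpha=\alpha_0$ and $\beta=\alpha_n$, where $n=d(v,w)$; since $\alpha_0$ and $\alpha_n$ fill $S_g$ they cut it into polygons whose $\alpha_n$-edges are exactly the arcs of $\beta\setminus\alpha$. Among all geodesics from $v$ to $w$ I would fix one, $v=v_0,\dots,v_n=w$, of minimal complexity, together with a standard set of representatives $\alpha_0,\dots,\alpha_n$ extending the given $\alpha$ and $\beta$. Exactly as in the proof of Proposition~\ref{P:initial}, minimality and Proposition~\ref{prop:reducible} together force the intersection sequence of every reference arc for this configuration to be not reducible; this is the property I will exploit.

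Now fix an arc $b$ of $\beta\setminus\alpha$, with endpoints $p,q\in\alpha_0\cap\alpha_n$, and take as reference arc a copy $\gamma$ of $b$ pushed slightly into an adjacent polygon while its endpoints are kept at $p$ and $q$, so that $\gamma$ and $b$ cobound a thin bigon. Anchoring the endpoints at the vertices $p,q$ --- rather than at interior points of the two flanking $\alpha_0$-edges, as a naive parallel push-off would do --- is the key device: every strand of an $\alpha_i$ entering the bigon through $\gamma$ must leave through $b$ (a strand entering and leaving through $\gamma$, or through $b$, would produce a removable bigon), so that $|\alpha_i\cap\gamma|=|\alpha_i\cap b|$ for all $i$ and no spurious crossings appear near the corners. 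Since $p$ and $q$ lie on no $\alpha_i$ with $1\le i\le n-1$ (there are no triple points), $\gamma$ is an admissible reference arc, and it is in minimal position with each $\alpha_i$ because it is isotopic rel endpoints to $b\subseteq\alpha_n$ while each $\alpha_i$ is already in minimal position with $\alpha_n$.

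The decisive consequence is that the intersection sequence $\sigma$ of $\gamma$ records precisely the crossings of $\alpha_1,\dots,\alpha_{n-1}$ with $b$. As $\alpha_{n-1}$ is disjoint from $\alpha_n\supseteq b$, the value $n-1$ never occurs in $\sigma$, so $\sigma$ takes values in $\{1,\dots,n-2\}$. I would then rerun the final counting in the proof of Proposition~\ref{prop:reducible}: after passing to sawtooth form by Lemma~\ref{L:sawtooth} (which preserves the number of entries equal to $1$), let $e_1,\dots,e_k$ be the ascending segments of $G(\sigma)$ with minimum $1$. Since $\sigma$ is not reducible, the first alternative of that proof must hold, namely $\max(e_{i+1})<\max(e_i)$ for every $i$; because every $\max(e_i)\le n-2$ this yields $k\le n-2$. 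Finally $k=|\alpha_1\cap\gamma|=|\alpha_1\cap b|$, so $|\alpha_1\cap b|\le n-2=d(v,w)-2$, and as $b$ was arbitrary this is the assertion.

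The main obstacle is the construction in the second paragraph: one must verify that the endpoint-anchored arc $\gamma$ really behaves like $b$ itself, eliminating the near-corner crossings --- in particular all crossings with $\alpha_{n-1}$ --- that an arbitrary parallel push-off would introduce, while remaining an admissible reference arc in minimal position with the entire standard family. Once that is secured, the improvement from $n-1$ to $n-2$ is automatic: dropping the now-absent value $n-1$ lowers the a priori bound $\max(e_1)\le n-1$ to $\max(e_1)\le n-2$ in the reducibility count, and no new surgeries are required.
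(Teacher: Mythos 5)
Your proof is correct and is essentially the paper's own argument: the paper's (two-sentence) proof likewise reruns the minimal-complexity/reducibility machinery of Proposition~\ref{prop:reducible} for reference arcs parallel to arcs of $\beta\setminus\alpha$, observing that such an arc never meets $\alpha_{n-1}$, so its intersection sequence takes values in $\{1,\dots,n-2\}$ and the count drops from $n-1$ to $n-2$. Your corner-anchored push-off, giving the bijection $|\alpha_i\cap\gamma|=|\alpha_i\cap b|$, is just a careful implementation of the paper's terser statement that ``$\gamma$ is a subset of $\beta$.''
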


\begin{proof}

The proof is essentially the same as the proof of Theorem~\ref{T:simplify}.  The only added observation is that, since $\gamma$ is a subset of $\beta$, every intersection sequence can be taken to have entries in $\{1,\dots,n-2\}$ instead of $\{1,\dots,n-1\}$.  
\end{proof}

Note that in the special case that vertices $v$ and $w$ have representatives $\alpha$ and $\beta$ that cut the surface into rectangles and hexagons only (e.g. the example of Section~\ref{S:computation}), then every reference arc is parallel to a reference arc as in Proposition~\ref{P:improvement}, and so in this case there are geodesics that are extra efficient in the sense that the intersection of a representative of $v_1$ with any reference arc is at most $n-2$ instead of $n-1$.

%%%
%%%
%%%

\appendix

\section{Webb's algorithm}
\label{sec:Webb}

In this appendix we give an exposition of Webb's algorithm for computing distance in $\C(S)$.  As with the efficient geodesic algorithm we will make the inductive hypothesis that for some $n \geq 2$ we have an algorithm to determine if the distance between two vertices is $0, \dots,n-1$ and we would like to give an algorithm for determining if the distance between two vertices is $n$.  First we introduce an auxiliary tool, the arc complex for a surface with boundary.

\p{Arc complex} Let $F$ be a compact surface with nonempty boundary.  The arc complex $\A(F)$ is the simplicial complex with $k$-simplices corresponding to $(k+1)$-tuples of homotopy classes of essential arcs in $F$ with pairwise disjoint representatives.  Here, homotopies are allowed to move the endpoints of an arc along $\partial F$, and an arc is essential if it is not homotopic into $\partial F$.  

\p{The algorithm} A maximal simplex of $\A(F)$ can be regarded as a triangulation of the surface obtained from $F$ by collapsing each component of the boundary to a point.  If $F$ is a compact, orientable surface of genus $g$ with $m$ boundary components, then the number of edges in any such triangulation is $6g+3m-6$.

Let $v$ and $w$ be two vertices of $\C(S)$ with $d(v,w) \geq 3$.  As in the efficient geodesic algorithm, it suffices by the induction hypothesis to list all candidates for vertices $v_1$ on a tight geodesic $v=v_0,\dots,v_{n}=w$.  Since there are finitely many vertices in each simplex of $\C(S)$ it further suffices to list all candidates for simplices $\sigma_1$ on a tight multigeodesic $v=\sigma_0,\dots,\sigma_{n}=w$.  

Suppose we have such a tight multigeodesic $v=\sigma_0,\dots,\sigma_{n}=w$.  We can choose representatives $\alpha_i$ of the $\sigma_i$ so that $\alpha_i \cap \alpha_{i+1} = \emptyset$ for all $i$ and so that each $\alpha_i$ lies in minimal position with $\alpha_0$.   If we cut $S$ along $\alpha_0$, we obtain a compact surface $S'$, some of whose boundary components correspond to $\alpha_0$.  

For each $i > 1$, the representative $\alpha_i$ gives a collection of disjoint arcs in $S'$ and hence a simplex $\tau_i$ of $\A(S')$ (some arcs of $\alpha_i$ might be parallel and these get identified in $\A(S')$).  For $i \geq 3$, the collection of arcs is filling, which means that when we cut $S'$ along these arcs we obtain a collection of disks and boundary-parallel annuli, and we say that the corresponding simplex of $\A(S')$ is filling.

Since there is a unique configuration for $\alpha_{n}$ and $\alpha_0$ in minimal position, there is a unique possibility for $\tau_{n}$.   As $\tau_{n} \cup \tau_{n-1}$ is contained in a simplex of the arc complex of $S'$ and since $\tau_{n}$ is filling, there are finitely many possibilities for $\tau_{n-1}$ (and we can explicitly list them).  This is the key point: there are infinitely many vertices of $\C(S)$ that correspond to any given simplex in the arc complex, but there are finitely many choices for the simplex itself.

Because $\tau_i$ is filling whenever $i \geq 3$, we can continue this process inductively, and explicitly list all possibilities for $\tau_2$.  Now, by the definition of a tight multigeodesic in $\C(S)$, the simplex $\sigma_1$ is represented by the union of the essential components of the boundary of a regular neighborhood of $\alpha_0 \cup \alpha_2$.  Equivalently, any such $\sigma_1$ is given by a regular neighborhood of the union of $\partial S'$ with a representative of $\tau_2$.  Hence there are finitely many (explicitly listable) possibilities for $\sigma_1$, as desired.

\p{A bound on the number of candidates} In the introduction we stated that the number of candidate simplices $\sigma_1$ produced by Webb's algorithm when $d(v,w)=n$ is bounded above by 
\[ 2^{(72g+12)\min\{n-2,21\}} (2^{6g-6}-1). \]
We will now explain this bound; we are grateful to Richard Webb for supplying us with the details.

We can think of the sequence $\tau_{n},\dots,\tau_3$ as a path in the filling multi-arc complex, that is, the simplicial complex whose vertices are simplices of $\A(S')$ whose geometric realizations fill $S'$ and whose edges correspond to simplices with geometric intersection number zero.  Then we obtain $\tau_2$ by extending this path by one more edge and taking some nonempty subset of the simplex of $\A(S')$ represented by the endpoint $\hat \tau_2$ of this extended path.

Webb proved that the degree of an arbitrary vertex of this filling multi-arc complex is bounded above by $2^{72g+12}$ (this is for the case where we start with a closed surface of genus $g$ and cut along a single simple closed curve, as above); see his paper \cite{Webb}.  Our extended path from $\tau_n$ to $\hat \tau_{n-2}$ has length $n-2$ and so this a priori gives a bound of $2^{(72g+12)(n-2)}$ for the number of possibilities for $\hat \tau_2$.  However, there is a version of the bounded geodesic image theorem which tells us that, because the $\tau_i$ arise from a geodesic in $\C(S)$, the actual distance in the filling multi-arc complex between $\tau_n$ and $\hat \tau_2$ is bounded above by 21.  This gives the first multiplicand in the desired bound.  The second multiplicand comes from the number of ways of choosing a nonempty sub-simplex $\tau_2$ of $\hat \tau_2$.  The number of vertices of $\tau_2$ is bounded above by $6g-6$, and so there are $2^{6g-6}-1$ ways to choose $\tau_2$ from $\hat \tau_2$.

\bibliographystyle{plain}
\bibliography{distance}

\end{document}